\newcommand{\R}{\mathbb{R}}
\newcommand{\Q}{\mathbb{Q}}
\newcommand{\C}{\mathbb{C}}
\newcommand{\Z}{\mathbb{Z}}
\newcommand{\es}[1]{\begin{equation}\begin{split}#1\end{split}\end{equation}}
\newcommand{\est}[1]{\begin{equation*}\begin{split}#1\end{split}\end{equation*}}
\newcommand{\s}{\mathcal{S}}
\newcommand{\tn}[1]{\textnormal{#1}}
\newtheorem*{teo*}{Theorem}
\newtheorem{theo}{Theorem}
\newtheorem{lemma}{Lemma}
\newtheorem*{rem*}{Remark}
\newcommand{\pr}[1]{\left( #1\right)}
\newcommand{\pg}[1]{\left\{ #1\right\}}
\newcommand{\pmd}[1]{\left| #1\right|}
\newcommand{\e}[1]{\textnormal{e}\pr{ #1}}
\begin{document}

\author{Sandro Bettin and Brian Conrey}
\title{Period functions and cotangent sums}
\date{}

\maketitle

\begin{abstract}
We investigate the period function of $\sum_{n=1}^\infty\sigma_a(n)\e{nz}$, showing it can be analytically continued to $|\arg z|<\pi$ and studying its Taylor series. We use these results to give a simple proof of the Voronoi formula and to prove an exact formula for the second moment of the Riemann zeta function. Moreover, we introduce a family of cotangent sums, functions defined over the rationals, that generalize the Dedekind sum and share with it the property of satisfying a reciprocity formula. 
\end{abstract}

\section{Introduction}
In the well-known theory of period polynomials one constructs a vector space of polynomials
associated with a vector space of modular forms. The Hecke operators act on each space and have the same eigenvalues. Thus, either vector space
produces the usual degree 2 L-series associated with holomorphic modular forms. In 2001
 Lewis and Zagier extended this theory and defined spaces of period functions associated to non-holomorphic modular forms, i.e. to Maass forms and real analytic Eisenstein series. Period functions are real analytic functions $\psi(x)$ which satisfy three term relations
\begin{equation} \label{eqn:three} \psi(x)=\psi(x+1)+(x+1)^{-2s} \psi\left( \frac{x}{1+x}\right)
\end{equation}
where $s=1/2+it$. The period functions for Maass forms are characterized by (\ref{eqn:three}) together with the growth conditions $\psi(x)=o(1/x)$ as $x\to 0^+$ and $\psi(x)=o(1)$ as $x\to \infty$; for these $s=1/2+ir$ where $1/4+r^2$ is the eigenvalue of the
Laplacian associated with a Maass form. For Eisenstein series, the $o$'s in the above growth conditions are replaced by $O$'s if $t\neq0$ and by $O\pr{\frac1{x|\log x|}}$ and $O(\log x)$ if $t=0$. They show that $\psi$, which is initially defined only in the upper half plane, actually has an analytic continuation to all of $\mathbb C$ apart from the negative real axis. 

To each period function is also associated a periodic and holomorphic function $f$ on the upper half plane,
\est{
f(z)=\psi(z)+z^{-2s}\psi\pr{-\frac1{z}}.
}

In this paper we focus on the case of real analytic Eisenstein series. For these, the periodic function $f$ turns out to be essentially \est{
\sum_{n=1}^\infty\sigma_{2s-1}(n)\e{nz},
}
where, as usual, $\sigma_a(n):=\sum_{d|n}d^a$ indicates the sum of the $a$-th power of the divisors of $n$ and $\e{z}:=e^{2\pi i z}$. We interpret Lewis and Zagier's results directly in terms of this function, obtaining a better understanding of the Taylor series of the associated period function. 
It turns out that the case $s=1/2$, i.e. $t=0$ is especially useful. In this case the arithmetic part of the $n$-th Fourier coefficient is $d(n)$, the number of divisors of $n$. 

There are several nice applications that are consequences of the analytic continuation of the
associated period function, i.e. they are consequences of the surprising fact that the function
$$\sum_{n=1}^\infty d(n) e(nz) -\frac{1}{z} \sum_{n=1}^\infty d(n)e(-n/z),$$
which apparently only makes sense when the imaginary part of $z$ is positive, actually
has an analytic continuation to ${\mathbb C}'$ the split complex plane (the complex with the negative real axis removed).
First, we obtain a new formula for the weighted mean square of the Riemann zeta-function on the critical line:
$$\int_0^\infty |\zeta(1/2+it)|^2 e^{-\delta t} ~ dt.$$
Previously, the best formula for this quantity was a main term plus an asymptotic, but not convergent, series of powers of $\delta$,
 each term an order of magnitude better than the previous as $\delta\to 0^+$.
 Our formula gives an asymptotic series which is also convergent.
 The situation is  somewhat analogous to the situation of the partition function
 $p(n)$. Hardy and Ramanujan found an asymptotic series for $p(n)$ and subsequently Rademacher gave a series which was both asymptotic and convergent. In both the partition case and our case, the exact formula allows for the computation of the sought quantity to {\it any}
 desired degree of precision, whereas an asymptotic series has limits to its precision.
Of course, an extra feature of $p(n)$, that is not present in our situation, is that since $p(n)$  is an integer it is known exactly once it is known to a precision of $0.5$. However, our formula does have the extra surprising feature that
the time required to calculate our desired mean square  is basically independent of $\delta$, apart from the intrinsic difficulty
of the extra work required just to write down a high precision number $\delta$.

A second application proves a surprising reciprocity formula for the Vasyunin sum, which is a cotangent sum that appears in the Nyman-Beurling criterion for the Riemann Hypothesis. Specifically,  the Vasyunin sum appears as part of the exact formula for the twisted mean-square of the Riemann zeta-function on the critical line:
  $$ \int_0^\infty |\zeta(1/2+it)|^2 (h/k)^{it} \frac{dt}{\tfrac 14 +t^2}.$$
  The fact that there is a reciprocity formula for the Vasyunin sum is a non-obvious symmetry relating this integral for $h/k$ and the integral for  $\overline{h}/k$ where $h\overline{h}\equiv 1 \bmod k$.  It is not apparent from this integral that there should be such a  relationship;  our formula reveals a hidden structure. 

The reciprocity formula is most simply stated in terms of the function
$$ c_0(h/k)=-\sum_{m=1}^{k-1}\frac m k \cot \frac {\pi m h}{k}$$
defined initially for non-zero rational numbers $h/k$ where $h$ and $k$ are integers with $(h,k)=1$ and $k>0$. The reciprocity formula can be simply stated as, ``The function
   $$c_0\pr{\frac hk}+\frac kh c_0\pr{\frac kh}-\frac1{\pi h}$$
extends from its initial definition on rationals $x=h/k$ to an (explicit) analytic function on the complex plane with the negative real axis deleted.'' This is nearly an example of what Zagier calls a ``quantum modular form.'' We proved this reciprocity formula in~\cite{BC}; in this paper, we generalize it to a family of ``cotangent sums'', containing both $c_0$ and the Dedekind sum.

These (imperfect) quantum modular forms are analogous to the ``Quantum Maass Forms'' studied by Bruggeman in~\cite{Br}, the former being associated to Eisenstein series and the latter to Maass forms. The main difference between these two classes of quantum forms comes from the fact that the $L$-functions associated to Maass forms are entire, while for Eisenstein series the associated $L$-functions are not, since they are products of two shifted Riemann zeta functions. This translates into Quantum Maass Forms being quantum modular forms in the strict sense, whereas the reciprocity formulas for the cotangent sums contain a non-smooth correction term.

As a third application, we give a generalization of the classical Voronoi summation formula, which is a formula for $\sum_{n=1}^\infty d(n) f(n)$ where $f(n)$ is a smooth rapidly decaying function. The usual formula proceeds from
$$\sum_{n=1}^\infty d(n) f(n)=\frac{1}{2\pi i} \int_{(2)} \zeta(s)^2 \tilde{f}(s) ~ds$$
where
$$\tilde{f}(s)=\int_0^\infty f(x)x^{-s}~dx.$$
One obtains the formula by moving the path of integration to the left to $\Re s = -1$, say, and  then using the functional equation
$$\zeta(s) = \chi(s)\zeta(1-s)$$
of $\zeta(s)$. Here, as usual,
$$ \chi(s)=2(2\pi)^{s-1}\Gamma(1-s). $$
In this way one obtains a leading term
$$ \int_0^\infty f(u)(\log u +2\gamma)~du,$$
from the pole of $\zeta(s)$ at $s=1$, plus another term
$$\sum_{n=1}^\infty d(n) \hat{f}(n)$$
where $\hat{f}(u)$ is a kind of Fourier-Bessel transform of $f$; specifically,
$$\hat{f}(u) =\frac{1}{2\pi i}\int_{(-1)} \chi(s)^2 u^{s-1} \tilde{f}(s)~ds=\int_0^\infty f(t) C(2 \pi \sqrt{tu}) ~ dt$$
with $C(z)=4 K_0(2z) -2 \pi Y_0(2z)$ where $K$ and $Y$ are the usual Bessel functions. By contrast, the period relation implies, for example, that for $0<\delta<\pi$ and $z=1-e^{-i\delta}$
\es{ \label{eqn:vor}
\sum_{n=1}^\infty d(n)\e{n z} =&\, \frac14+2\frac{\log\pr{-2\pi iz}-\gamma}{2\pi i z}+\frac1{z}\sum_{n=1}^\infty d(n) \e{\frac{-n }{z} }+ \sum_{n=1}^ \infty c_n e^{-i n \delta}
}
where $c_n \ll e^{-2\sqrt{\pi n}}$. This is a useful formula which cannot be readily extracted from the Voronoi formula.
In fact, the Voronoi formula is actually an easy consequence of the formula \eqref{eqn:vor}.
In section~\ref{VF} we give some other applications of this extended Voronoi's formula.

The theory and applications described above are for the period function associated with the Eisenstein series with $s=1/2$. In this paper we work in a slightly more general setting with $s=a$, an arbitrary complex number.
The circle of ideas presented here have other applications and further generalizations, for example to exact formulae for averages
of Dirichlet L-functions, which will be explored in future work.

\section{Statement of results}
For $a\in\C$ and $\Im(z)>0$, consider
\est{
\s_a\pr{z}:=\sum_{n=1}^\infty\sigma_a(n)\e{nz}.
}
For $a=2k+1$, $k\in\Z_{\geq1}$, $\s_a(z)$ is essentially the Eisenstein series of weight $2k+2$,
\est{
E_{a+1}(z) = 1+ \frac {2}{\zeta(-a)}\s_{a}\pr{z},
}
for which the well known modularity property
\est{
E_{2k}(z) -\frac{1}{z^{2k}} E_{2k}\pr{-\frac1z}=0
}
holds when $k\ge2$. For other values of $a$ this equality is no longer true, but the period function
\es{\label{rf}
\psi_{a}\pr{z}:=&\,E_{a+1}(z) -\frac{1}{z^{a+1}} E_{a+1}\pr{-\frac1z}\\
}
still has some remarkable properties.
\begin{theo}\label{tb}
Let $\Im(z)>0$ and $a\in\C$. Then $\psi_a(z)$ satisfies the three term relation
\es{\label{3tr}
\psi_a(z)-\psi_a(z+1)=\frac1{(z+1)^{1+a}}\psi_a\pr{\frac{z}{z+1}}
}
and extends to an analytic function on $\C':=\C\setminus{\R_{\leq0}}$ via the representation
\est{
\psi_a(z)=\frac i{\pi z}\frac{\zeta(1-a)}{\zeta(-a)}-i\frac1{z^{1+a}}\cot\frac{\pi a}2+i\frac{g_a(z)}{\zeta(-a)},\\
}
where
\es{\label{g_a}
g_a(z):=&-2\sum_{1\leq n\leq M}(-1)^{n}\frac{B_{2n}}{(2n)!}\zeta(1-2n-a)(2\pi z)^{2n-1}+\\
&+\frac1{\pi i}\int_{\pr{-\frac12-2M}}\zeta(s)\zeta(s-a)\Gamma(s)\frac{\cos\frac{\pi a}2}{\sin\frac {\pi \pr{s-a}}2}(2\pi z)^{-s}\,ds,
}
and $M$ is any integer greater or equal to $-\frac12\min\pr{0,{\Re(a)}}$.
\end{theo}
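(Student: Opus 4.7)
The statement splits into the three-term relation and the analytic continuation. The three-term relation follows from periodicity of $E_{a+1}$ alone: since $\s_a(z+1)=\s_a(z)$, we have $E_{a+1}(z+1)=E_{a+1}(z)$. Combined with $z/(z+1)=1-1/(z+1)$ and $-(z+1)/z=-1-1/z$, periodicity gives $E_{a+1}(z/(z+1))=E_{a+1}(-1/(z+1))$ and $E_{a+1}(-(z+1)/z)=E_{a+1}(-1/z)$. Expanding $(z+1)^{-(a+1)}\psi_a(z/(z+1))$ via the definition, substituting these identities, and comparing term by term with $\psi_a(z)-\psi_a(z+1)$ verifies \eqref{3tr}.

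For the analytic continuation, the plan is to use a Mellin--Barnes representation. The Dirichlet series identity $\sum_n\sigma_a(n)n^{-s}=\zeta(s)\zeta(s-a)$ combined with $\Gamma(s)(2\pi n)^{-s}=\int_0^\infty e^{-2\pi n t}t^{s-1}\,dt$ yields, for $\Im z>0$ and $\Re s=c>\max(1,1+\Re a)$,
\[
\s_a(z)=\frac{1}{2\pi i}\int_{(c)}\zeta(s)\zeta(s-a)\Gamma(s)(-2\pi iz)^{-s}\,ds,
\]
and similarly for $\s_a(-1/z)$ (using that $z\mapsto -1/z$ preserves the upper half plane). Inserting both into $\psi_a=E_{a+1}(z)-z^{-(a+1)}E_{a+1}(-1/z)$ and shifting both contours leftward to $\Re s=-\tfrac12-2M$ picks up residues at $s=1,\,1+a,\,0,\,-(2n-1)$ for $n=1,\ldots,M$, leaving a line integral on the shifted contour.

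The residue at $s=0$ produces $-\zeta(-a)/2$ in each Mellin integral; after multiplication by $2/\zeta(-a)$ this exactly cancels the explicit term $(1-z^{-(a+1)})$ in $\psi_a$. The residues at $s=1$ and $s=1+a$, using the functional-equation identity
\[
\zeta(1+a)\Gamma(1+a)(2\pi)^{-(1+a)}=-\frac{\zeta(-a)}{2\sin(\pi a/2)}
\]
and the principal-branch relations $(-2\pi iz)^{-s}=(2\pi z)^{-s}e^{is\pi/2}$ and $(2\pi i/z)^{-s}=(2\pi)^{-s}z^s e^{-is\pi/2}$ on the upper half plane, simplify to the main terms $\frac{i}{\pi z}\frac{\zeta(1-a)}{\zeta(-a)}$ and $-iz^{-(1+a)}\cot(\pi a/2)$ stated in the theorem. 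The residues at $s=-(2n-1)$, using $\zeta(1-2n)=-B_{2n}/(2n)$, contribute (together with matching contributions from the second integral) the Bernoulli sum appearing in $g_a(z)$.

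The final step is to identify the surviving line integral with the integral defining $g_a(z)$. The plan is to substitute $s\mapsto 1+a-s$ in the integral coming from $z^{-(a+1)}\s_a(-1/z)$ and then apply the reflection $\Gamma(s)\Gamma(1-s)=\pi/\sin(\pi s)$ together with the functional equation of $\zeta$; the factors $\cos(\pi s/2)$, the identity $\sin(\pi(s-a))=2\sin(\pi(s-a)/2)\cos(\pi(s-a)/2)$, and the phases $e^{\pm i\pi s/2}$ collapse into the single factor $\cos(\pi a/2)/\sin(\pi(s-a)/2)$ multiplying $\zeta(s)\zeta(s-a)\Gamma(s)(2\pi z)^{-s}$. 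The main obstacle is precisely this trigonometric collapse and the branch bookkeeping that links the two Mellin integrals; once it is done, absolute convergence of the resulting integral on $\Re s=-\tfrac12-2M$ (by Stirling's formula for $\Gamma$ and polynomial growth of $\zeta$ on vertical lines) provides the analytic continuation of $\psi_a$ to $\C'$.
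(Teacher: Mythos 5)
Your overall strategy (Mellin--Barnes representation, contour shift, functional equation, trigonometric collapse) is the paper's, and your treatment of the three-term relation and of the first integral $\s_a(z)$ is fine. But the way you handle the second piece $z^{-(1+a)}\s_a(-1/z)$ contains a genuine error. You propose to shift its contour leftward to $\Re s=-\tfrac12-2M$ \emph{in the original variable} and collect residues at $s=1,\,1+a,\,0,\,-(2n-1)$. Because this integrand carries $(2\pi i/z)^{-s}=(2\pi)^{-s}z^{s}e^{-i\pi s/2}$ and the whole expression is multiplied by $z^{-(1+a)}$, those residues contribute powers $z^{-a}$ (at $s=1$), $z^{0}$ (at $s=1+a$), and $z^{-a-2n}$ (at $s=1-2n$). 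None of these powers occurs in the statement of the theorem, so your claims that the $s=1,\,1+a$ residues ``simplify to the main terms'' and that the $s=-(2n-1)$ residues give ``matching contributions'' to the Bernoulli sum are false as written: only the residues of the \emph{first} integral produce the terms $z^{-1}$, $z^{-(1+a)}$ and $(2\pi z)^{2n-1}$. The spurious terms would only disappear after you transform the shifted line integral by $s\mapsto 1+a-s$ --- but that substitution sends $\Re s=-\tfrac12-2M$ to $\Re s=\tfrac32+2M+\Re(a)$, far to the right, and bringing it back onto a common contour with the first integral forces you to recross all the poles, generating exactly the compensating residues you omitted. Your plan never confronts this.

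The repair is a reordering, and it is precisely what the paper does: apply the functional equation to \emph{both} $\zeta(s)$ and $\zeta(s-a)$ in the second Mellin integral while its contour is still at $\Re s=2+\max(0,\Re(a))$, then substitute $s\to 1-s+a$. The new integrand,
\begin{equation*}
\zeta(s)\zeta(s-a)\Gamma(s)\,\frac{e^{\frac{\pi i(s-a)}2}\cos\frac{\pi s}2}{\sin\frac{\pi(s-a)}2}\,(2\pi z)^{-s},
\end{equation*}
lives on $\Re s=-1+\min(0,\Re(a))$ and is holomorphic to the left of that line: the poles of $\Gamma(s)$ at odd negative integers are cancelled by the zeros of $\cos\frac{\pi s}2$, those at even negative integers by the trivial zeros of $\zeta(s)$, and the poles of $1/\sin\frac{\pi(s-a)}2$ at $s=a-2k$ by the trivial zeros of $\zeta(s-a)$. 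Hence the second piece can be placed on $\Re s=-\tfrac12-2M$ \emph{without contributing any residues}, all of $r_{a,M}(z)$ comes from the first integral alone, and the two line integrals combine via the identity $e^{\frac{\pi is}2}+i\,e^{\frac{\pi i(s-a)}2}\cos\frac{\pi s}2/\sin\frac{\pi(s-a)}2=i\cos\frac{\pi a}2/\sin\frac{\pi(s-a)}2$ to give $g_a$. Your trigonometric collapse is the right target, but it only works once the two integrals sit on the same contour with no leftover residue debris, and that requires doing the functional equation first.
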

Here and throughout the  paper equalities are to be interpreted as identities between meromorphic functions in $a$. In particular, taking the limit $a\rightarrow 0^+$, we have
\est{
\psi_0(z)&=-2\frac{\log2\pi z-\gamma}{\pi iz}-2ig_0(z),\\
g_0(z)&=\frac1{\pi i}\int_{\pr{-\frac12}}\zeta(s)^2\frac{\Gamma(s)}{\sin\frac {\pi s}2}(2\pi z)^{-s}\,ds=\frac1{\pi i}\int_{\pr{-\frac12}}\frac{\zeta(s)\zeta(1-s)}{\sin\pi s} z^{-s}\,ds.
}

Theorem~\ref{tb} is essentially a reformulation of Lewis and Zagier's results for the noncuspidal case in~\cite{LZ} and can be seen as a starting point for their theory of period functions. 

For ease of reference, note that~\eqref{rf} can be rewritten in terms of $\s_a$ and $g_a$ as
\es{\label{rfs}
\s_a(z)&-\frac1{z^{a+1}}\s_a\pr{-\frac1z}=\\
&=i\frac {\zeta(1-a)}{2\pi z}-\frac{\zeta(-a)}2
+\frac{e^{\frac{\pi i(a+1)} 2}\zeta(a+1)\Gamma(a+1)}{(2\pi z)^{a+1}}+\frac{i}2g_a(z).
}

Another important feature of the function $\psi_a\pr{z}$ comes from the properties of its Taylor series. For example, in the case $a=0$ one has
\est{
\frac{\pi i}{2} (1+z)\psi_{0}(1+z)=-1-\frac z2+\sum_{m=2}^{\infty}a_m(-z)^m,
}
with
\est{
a_m&:=\frac1{n(n+1)}+2b_n+2\sum_{j=0}^{n-2}{\binom {n-1}j}b_{j+2},\\
b_n&:=\frac{\zeta\pr{n}B_n}{n}
}
and where $B_{2n}$ denotes the $2n$-th Bernoulli number. In particular, the values $a_{m}$ are rational polynomials in $\pi^2$. The terms involved in the definition of $a_m$ are extremely large, since
\est{
b_{2n}\sim\frac{B_{2n}}{2n}\sim(-1)^{n+1}2 \sqrt{\frac \pi n} \left(\frac{n}{ \pi e} \right)^{2n}
}
as $n\rightarrow\infty$, though there is a lot of cancellation; for example, for $m=20$ one has
\est{
a_m=&\,\frac1{420}+\frac{\pi^2}{36}-\frac{19\,\pi^4}{600}+\frac{646\,\pi^6}{19845}-\frac{323\,\pi^8}{1500}+\frac{4199\,\pi^{10}}{343035}+\\
&-\frac{154226363\,\pi^{12}}{36569373750}+\frac{1292\,\pi^{14}}{1403325}-\frac{248571091\,\pi^{16}}{2170943775000}+\\
&+\frac{1924313689\,\pi^{18}}{288905366499750}-\frac{30489001321\,\pi^{20}}{252669361772953125}\\
=&\,0.0499998087\dots
}
Notice how close this number is to $\frac1{20}$; this observation can be made for all $m$ and in fact in~\cite{BC} we proved that
\est{
a_m-\frac{1}{m}\sim{2^\frac 54\pi^\frac34}\frac{e^{-2\sqrt{\pi m}}}{m^\frac34}\pr{\sin\pr{2\sqrt{\pi m}+\frac 38\pi}+O\pr{\frac1{\sqrt m}}}.
}

In this paper we show that similar results hold for the Taylor series at any point $\tau$ in the half plane $\Re(\tau)>0$ and for any $a\in\C$. We give a proof in the following theorem, using $g_a$ instead of $\psi_a$ to simplify slightly the resulting formulae.

\begin{theo}\label{tb2}
Let $\Re(\tau)>0$ and for $|z|<|\tau|$, let
\est{
g_{a}(\tau+z):=\sum_{m=0}^{\infty}\frac{g_{a}^{(m)}(\tau) }{m!}z^m
}
be the Taylor series of $g_a(z)$ around $\tau$. Then
\es{\label{gex}
\frac{g_{a}^{(m)}(1) }{m!}&=-\sum_{\substack{2n-1+k=m,\\ n, k\geq1}}(-1)^{n+m}B_{2n}\zeta(1-2n-a)\frac{\Gamma(2n+a+k)}{\Gamma(2n+a)k!(2n)!}2(2\pi )^{2n-1}+\\
&\quad+(-1)^m\cot\frac{\pi a}2\zeta(-a)\frac{\Gamma(1+a+m)}{\Gamma(1+a)m!}+\\
&\quad+(-1)^m\pr{\frac{\Gamma(1+a+m)}{\Gamma(a)(m+1)!}-1}\frac{\zeta(1-a)}{\pi},
}
and in particular  if $a\in\Z_{\leq0}$, $(a,m)\neq (0,0)$, then $\pi g_{a}^{(m)}(1)$ is a rational polynomial in $\pi^2$. Moreover,
\es{\label{aseq}
\frac{g_{a}^{(m)}(\tau) }{m!}
&=  \cos	\pr{\frac{\pi a}2} \frac{2^{\frac74-\frac a2}}{\pi^{\frac34+\frac a2}} \frac{e^{-2\sqrt {\pi\tau m}}}{m^{\frac14 -\frac a2}\tau^{m+\frac34+\frac a2}}\times\\
&\quad\times \pr{\cos\pr{2\sqrt{\pi\tau m}-\frac\pi8\pr{2a-1}+(\tau +m)\pi}+O_{\tau,a}\pr{\frac1{\sqrt m}}},
}
as $m\rightarrow\infty$.
\end{theo}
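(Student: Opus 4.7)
The argument splits into two parts corresponding to the two displayed identities.

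\emph{For \eqref{gex}}, I start from the integral representation \eqref{g_a} and differentiate it $m$ times at $z=1$, using $\frac{d^m}{dz^m}(2\pi z)^{-s}|_{z=1}=(-1)^m(2\pi)^{-s}\Gamma(s+m)/\Gamma(s)$ to convert the Mellin--Barnes integrand into one with $\Gamma(s+m)$ in place of $\Gamma(s)$; the polynomial part of \eqref{g_a} yields binomial factors $\binom{2n-1}{m}$ that are nonzero only for $n\geq\lceil(m+1)/2\rceil$. Rewriting $\zeta(s-a)/\sin(\pi(s-a)/2)=2(2\pi)^{s-a-1}\Gamma(1-s+a)\zeta(1-s+a)$, a direct consequence of the functional equation of $\zeta$, converts the integrand into the cleaner product $\zeta(s)\zeta(1-s+a)\Gamma(s+m)\Gamma(1-s+a)$ times an elementary prefactor. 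I then shift the contour rightward, collecting residues at $s=1$ (from $\zeta(s)$), $s=a$ (from $\zeta(1-s+a)$), $s=1+a$ (from $\Gamma(1-s+a)$, with $\zeta(0)\neq 0$), and $s=2n+a$ for $n\geq 1$ (from $\Gamma(1-s+a)$ with odd index $k=2n-1$; the would-be poles at $s=1+a+2k'$ with $k'\geq 1$ are cancelled by the trivial zeros $\zeta(-2k')=0$).

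\emph{Each residue simplifies} via the functional equation and the reflection $\Gamma(a+2n)\Gamma(1-a-2n)=\pi/\sin\pi a$. The residue at $s=1+a$, combined with $\zeta(1+a)=-(2\pi)^{1+a}\zeta(-a)/(2\Gamma(1+a)\sin(\pi a/2))$, yields the $\cot(\pi a/2)$ second term. The residues at $s=1$ and $s=a$ together with the remaining polynomial contributions combine into the third bracket $\bigl(\Gamma(1+a+m)/(\Gamma(a)(m+1)!)-1\bigr)\zeta(1-a)/\pi$. Finally, the residues at $s=2n+a$ for $n\geq 1$, after invoking $\zeta(a+2n)=2(2\pi)^{a+2n-1}(-1)^n\Gamma(1-a-2n)\sin(\pi a/2)\zeta(1-a-2n)$, reorganize into the first sum of \eqref{gex}; the upper bound $n\leq m/2$ emerges from the cancellation between the polynomial Taylor contribution (for $n\geq\lceil(m+1)/2\rceil$) and the residues at $s=2n+a$ in the complementary range. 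The rationality statement for $a\in\Z_{\leq 0}$, $(a,m)\neq(0,0)$, follows because the pole of $\cot(\pi a/2)$ at negative even $a$ is cancelled by $\zeta(-a)=0$, and the remaining residue expressions involve only rational values of $\zeta$ at negative integers and ratios of Gammas, the factors of $\pi$ being tracked explicitly.

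\emph{For the asymptotic \eqref{aseq}}, the same Mellin--Barnes integral at general $\tau$ with $|\arg\tau|<\pi$, after the substitution $u=s+m$ and Stirling's approximation applied to the large-argument Gammas $\Gamma(s+m)$ and $\Gamma(1-s+a)$, takes the shape of a Barnes-type integral representing a modified Bessel function of argument $2\sqrt{\pi\tau m}$. A saddle-point (steepest-descent) analysis locates two conjugate saddles in the complex $s$-plane, whose combined contribution produces the oscillatory factor $\cos(2\sqrt{\pi\tau m}-\pi(2a-1)/8+(\tau+m)\pi)$ modulating $e^{-2\sqrt{\pi\tau m}}$, while the prefactor $m^{a/2-1/4}\tau^{-m-3/4-a/2}$ arises from the Gaussian correction at the saddles and from evaluating $\tau^{-s}$ at the saddle value $s\sim\sqrt{\tau m/\pi}$.

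\emph{The main technical obstacles} are, in Part~1, justifying that the contour can be pushed through infinitely many poles and that the residue series at $s=2n+a$ cancels cleanly with the polynomial Taylor contribution for $n>m/2$: this residue series is only conditionally convergent, so the cancellation has to be made rigorous using Stirling bounds together with the vertical decay of $\Gamma(1-s+a)$ to control the tail of both the residue sum and the shifted integral. In Part~2, obtaining the uniform error $O_{\tau,a}(m^{-1/2})$ requires careful control of the non-leading Stirling corrections and of the contribution from the off-saddle portion of the steepest-descent contour, especially as $\tau$ approaches the boundary of the allowed region.
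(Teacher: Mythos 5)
Your plan for \eqref{gex} is genuinely different from the paper's, and as described it has a gap I do not think you can close. After inserting $\Gamma(s+m)/\Gamma(s)$ and using $\zeta(s-a)/\sin\frac{\pi(s-a)}{2}=2(2\pi)^{s-a-1}\Gamma(1-s+a)\zeta(1-s+a)$, the integrand is a \emph{constant} times $\zeta(s)\zeta(1-s+a)\Gamma(s+m)\Gamma(1-s+a)$, and the residue at $s=2n+a$ works out to be proportional to $B_{2n}\zeta(1-2n-a)\,\Gamma(2n+a+m)/\bigl(\Gamma(2n+a)(2n)!\bigr)$, whereas the corresponding term of \eqref{gex} (with $k=m-2n+1$) carries $\Gamma(1+a+m)/\bigl(\Gamma(2n+a)(m-2n+1)!(2n)!\bigr)$; the ratio $\Gamma(2n+a+m)(m-2n+1)!/\Gamma(1+a+m)$ equals $3+a$ already for $n=1$, $m=2$, so the residues do not reorganize into the first sum term by term. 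Worse, since $B_{2n}/(2n)!\sim 2(-1)^{n+1}(2\pi)^{-2n}$ and $\zeta(2n+a)\to1$, the residues at $s=2n+a$ grow like $\Gamma(2n+a+m)(2\pi)^{-2n}$, so the residue series \emph{diverges} (it is not merely conditionally convergent), and the integral along $\Re s=\sigma$ does not tend to $0$ as $\sigma\to+\infty$ because $\Gamma(s+m)(2\pi)^{-s}$ dominates; the contour simply cannot be pushed past all these poles. Nor can the polynomial part rescue you: after $m$-fold differentiation at $z=1$ it contributes factors $(2n-1)!/(2n-1-m)!$ coming from the reflected poles $s=1-2n$, which have a different shape from $\Gamma(2n+a+m)$. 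The paper avoids all of this by never differentiating the integral representation at $z=1$: it applies the three term relation \eqref{3tr} to write $g_a(1+z)$ in terms of $g_a(z)-(1+z)^{-1-a}g_a\bigl(z/(1+z)\bigr)$ plus elementary functions, and then uses only the local behaviour of $g_a$ at $0$, namely the polynomial part of \eqref{g_a} up to $O(|z|^{2M+1/2})$ with $M$ at one's disposal. The binomial expansion of $1-(1+z)^{-2n-a}$ is what produces $\Gamma(2n+a+k)/(\Gamma(2n+a)k!)$ and the constraint $2n-1+k=m$ with only finitely many contributing terms; the coefficients so obtained must be the Taylor coefficients because $g_a$ is holomorphic at $1$. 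You should switch to that argument (or supply a genuinely new regularization of your divergent residue sum).

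For the asymptotic \eqref{aseq} your saddle-point plan is a legitimate alternative: the paper instead applies the functional equation, expands $\zeta(1-s)\zeta(1-s+a)$ into its Dirichlet series, recognizes each term as a convolution of a $K$-Bessel function against $u^{m}e^{-u}$, and finishes with a Laplace-method lemma (Lemma~\ref{asympt}); your two conjugate saddles correspond exactly to the paper's $I^{\pm}_{m,a}$ and the $e^{\pm\pi i/4}$ rotations of the Bessel argument, and the leading term comes from the first Dirichlet coefficient. This part of your proposal is credible in outline, but to get the stated $O_{\tau,a}(m^{-1/2})$ you must carry the first correction term of the Bessel (or Stirling) expansion through the saddle-point analysis, which your sketch does not yet do.
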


Some of the ideas used in the proofs of Theorem~\ref{tb} and~\ref{tb2} can be easily generalized to a more general setting. For example, let $F(s)$ be a meromorphic function on $1-\omega\leq\Re(s)\leq\omega$ for some $1<\omega<2$ with no poles on the boundary and assume $|F(\sigma+it)|\ll_{\sigma} e^{\pr{\frac\pi2 -\eta }|t|}$ for some $\eta>0$. Let
\es{\label{Wpm}
W_{+}\pr{z}:=&\,\frac1{2\pi i}\int_{\pr{\omega}} F\pr{s}\Gamma(s)(-2\pi i z)^{-s}\,ds,\\
W_{-}\pr{z}:=&\,\frac1{2\pi i}\int_{\pr{\omega}} F\pr{1-s}\Gamma(s)(-2\pi i z)^{-s}\,ds,\\
}
for $\frac{\pi}2-\eta<\arg z<\frac\pi 2+\eta$. (Notice that these functions are essentially convolutions of the exponential function and the Mellin transform of $F(s)$.) Then we have
\es{\label{fbF}
\sum_{n=1}^\infty d(n)W_{+}\pr{nz}&-\frac1z\sum_{n=1}^\infty d(n)W_{-}\pr{-\frac nz}=R(z)+k(z),\\
}
where $R(z)$ is the sum of the residues of $F\pr{s}\Gamma(s)\zeta(s)^2(-2\pi i z)^{-s}$ between $1-\omega$ and $\omega$, and
\est{
k(z):=&\,\frac1{2\pi }\int_{\pr{1-\omega}}F(s)\frac{\zeta(s)\zeta(1-s)}{\sin\pi s}z^{-s}\,ds
}
is holomorphic on $|\arg(z)|<\frac{\pi }2+\eta$. Moreover, if we assume that $F(s)$ is holomorphic on $\Re(s)<1-\omega$, then it follows that the Taylor series of $k(z)$ converges very fast,
\est{
\frac{k^{\pr n}(\tau)}{n!}\ll n^{-B}|\tau|^{-n}
}
for any $B>0$ and $\tau$ such that $|\arg\tau|<\eta$. Also, $W_{-}\pr{z}$ decays faster than any power of $z$ at infinity and so the second sum in~\eqref{fbF} is rapidly convergent and is very small if we let $z$ go to zero in $|\arg z|<\eta$. In section~\ref{VF} we will give an explicit example; a subsequent paper will elaborate on this.
 
The Voronoi summation formula is an important tool in analytic number theory; in its simplest form, it states that, if $f(u)$ is a smooth function of compact support, then
\es{\label{vor}
\sum_{n=1}^{\infty}d(n)f(n)=\sum_{n=1}^{\infty}d(n)\hat f (n)+\int_0^\infty f(t)\pr{\log t+2\gamma}\,dt+\frac{f(0)}4,
}
where
\est{
\hat f(x):=4\int_{0}^\infty f(t)\pr{K_0\pr{4\pi\sqrt{tx}}-\frac\pi 2Y_0\pr{4\pi\sqrt{tx}}}\,dt.
}
This formula can be deduced from~\eqref{fbF} (or also directly from~\eqref{rfs}) as a very easy corollary. Actually, Voronoi's formula can be interpreted as a version of the formula~\eqref{rfs} confined to the positive real axis. If we get rid of this limitation and we use directly the period formula~\eqref{rfs}, we are able to obtain interesting results also for weight functions of the shape $f(u)=e^{-\delta u}$, for which the Voronoi summation formula fails to give a useful formula. (Try it!)
Thus, we have a generalization of Voronoi's formula.

The use of a weight function of the shape $e^{-\delta u}$ is fundamental to investigate the smoothly weighted second moment of the Riemann zeta function,
\est{
L_{2k}(\delta):=\int_0^\infty\pmd{\zeta\pr{\frac12+it}}^{2k}e^{-\delta t}\,dt,
}
in the case $k=1$. These integrals play a major role in the theory of the Riemann zeta function and getting good upper bounds on their growth as $\delta\rightarrow0^+$ would imply the Lindel\"of hypothesis. Unfortunately, the only two value of $k$ for which the asymptotics are known are $k=1$ (Hardy and Littlewood,~\cite{HL}) and $k=2$ (Ingham,~\cite{In}); for other values we have just conjectures (see~\cite{CGh},~\cite{CGo} and~\cite{KS}). For $k=1$, it is easy to see that the smooth moment is strictly related to the sum $\s_0\pr{-e^{-i\delta}}$ and, from this, it is easy to deduce an asymptotic expansion for $L_{2k}(\delta)$. This classical asymptotic series is not convergent. Here we replace the series
 by two series, each of which are absolutely convergent asymptotic series. (See also Motohashi \cite{MOT}). The following theorem provides a new exact formula for $L_{1}(\delta)$, by applying Theorem~\ref{tb} and~\ref{tb2} to $\s_0\pr{-e^{-i\delta}}$.
\begin{theo}\label{tr}
For $0<\Re(\delta)<\pi$, we have
\begin{equation*}
\begin{split}
L_1(\delta)=&\frac{\gamma-\log2\pi\delta}{2\sin\frac\delta2}+\frac{\pi i}{\sin\frac\delta2}\s_0\left(\frac {-1}{1-e^{-i\delta}}\right)+h(\delta)+k(\delta),\\
\end{split}
\end{equation*}
where $k(\delta)$ is analytic in $\pmd{\Re(\delta)}<\pi$ and $h(\delta)$ is $C^\infty$ in $\R$ and holomorphic in
\est{
\C'':=\C\setminus\left\{x+iy\in\C\mid x\in2\pi\Z,\ y\geq0\right\}.
}
Moreover, $h(0)=0$ and, if $\Im(\delta)\leq0$,
\est{
h(\delta)=i\sum_{n\geq0} h_{n} e^{-i\pr{n+\frac12}\delta},
}
with
\est{
h_n=  2^\frac74\pi^{\frac14}\frac{e^{-2\sqrt{\pi n}}}{n^\frac14}\sin\pr{2\sqrt{\pi n}+\frac {5\pi}{8}}+O\pr{\frac{e^{-2\sqrt{\pi n}}}{n^\frac34}},
}
as $n\rightarrow\infty.$ 
\end{theo}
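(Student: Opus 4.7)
The plan is to reduce Theorem~\ref{tr} to two ingredients already proved: the period relation~\eqref{rfs} at $a=0$, and the Taylor asymptotics~\eqref{aseq} of Theorem~\ref{tb2}. First I would relate $L_1(\delta)$ to a single value of $\s_0$. Starting from the Mellin pair $\Gamma(s)\zeta(s)^2=\int_0^\infty \s_0\pr{\tfrac{iy}{2\pi}}y^{s-1}\,dy$ and writing $|\zeta(\tfrac12+it)|^2=\zeta(\tfrac12+it)\zeta(\tfrac12-it)$, I would open one factor as a Dirichlet series, evaluate the resulting $t$-integrals via $\int_0^\infty n^{-it}e^{-\delta t}\,dt=(\delta+i\log n)^{-1}$, and shift contours. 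The double pole of $\zeta(s)^2$ at $s=1$ produces the elementary term $\frac{\gamma-\log 2\pi\delta}{2\sin(\delta/2)}$, while the shifted integral is recognised as $\frac{\pi i}{\sin(\delta/2)}\s_0\pr{-1/(1-e^{-i\delta})}$; note that $-1/(1-e^{-i\delta})$ has positive imaginary part for $\delta\in(0,\pi)$, so $\s_0$ is defined there.

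Next I would apply~\eqref{rfs} with $a=0$ and $z=1-e^{-i\delta}$ (which has $\Re z>0$). This converts $\s_0\pr{-1/(1-e^{-i\delta})}$ into $(1-e^{-i\delta})\s_0(1-e^{-i\delta})$ plus explicit elementary terms plus $\tfrac{i}{2}(1-e^{-i\delta})g_0(1-e^{-i\delta})$. The $\s_0(1-e^{-i\delta})$ piece is analytic in $|\Re\delta|<\pi$; writing $1-e^{-i\delta}=2i\sin(\delta/2)e^{-i\delta/2}$ then isolates $\log 2\pi\delta$ from an analytic remainder $\log\pr{(1-e^{-i\delta})/\delta}$ in the elementary terms. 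Collecting all analytic remainders into $k(\delta)$ leaves only $g_0(1-e^{-i\delta})$ to analyse.

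For this I would Taylor-expand $g_0$ around $\tau=1$,
\est{
g_0(1-e^{-i\delta})=\sum_{m=0}^\infty\frac{g_0^{(m)}(1)}{m!}(-e^{-i\delta})^m,
}
which by~\eqref{aseq} converges absolutely on $\Im\delta\le 0$ since the coefficients decay like $e^{-2\sqrt{\pi m}}m^{-1/4}$. Multiplying by the prefactors from Step~1 — in particular a factor $e^{-i\delta/2}/\sin(\delta/2)$ that shifts frequencies from $m$ to $m+\tfrac12$ — the series becomes $i\sum_{n\ge 0}h_n e^{-i(n+1/2)\delta}$, and~\eqref{aseq} with $a=0,\tau=1$ directly gives the stated asymptotic for $h_n$ after absorbing the $(-1)^m$ and $e^{i(m+1)\pi}$ phases and converting $\cos$ to $\sin$. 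The singular set $\{x+iy:x\in2\pi\Z,\ y\ge 0\}$ arises from the branch of $\log(1-e^{-i\delta})$, and $h(0)=0$ follows by matching the leading behaviours at $\delta=0$. The heaviest step is Step~1 — the contour-shift accounting that produces an explicit $\s_0$-value with argument $-1/(1-e^{-i\delta})$ (rather than an unwieldy integral) and correctly assigns each piece to the singular part $h$ or the analytic part $k$. A secondary delicate point is the verification of $h(0)=0$, which requires matching the leading constants of the $g_0$-expansion against those of the elementary part as $\delta\to 0$.
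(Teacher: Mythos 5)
Your overall architecture --- reduce $L_1(\delta)$ to a value of $\s_0$, apply the period relation \eqref{rfs} at $z=1-e^{-i\delta}$, identify $h(\delta)$ with $i\pi e^{-i\delta/2}g_0(1-e^{-i\delta})$, and read off the asymptotics of $h_n$ from \eqref{aseq} with $a=0$, $\tau=1$ --- is the paper's, and your final step (Taylor expansion of $g_0$ at $1$ for $\Im\delta\le0$, the frequency shift by $\tfrac12$, absorbing the phases) is sound. But Step 1 has the two sums playing swapped roles, and this breaks the argument. What a contour shift and the Dirichlet series of $\zeta(s)^2$ actually produce is
\est{
\int_{(2)}(2\pi)^{-s}\Gamma(s)e^{-\frac{\pi is}2}\zeta(s)^2e^{i\delta s}\,ds=2\pi i\sum_{n\geq1}d(n)e^{-2\pi ine^{-i\delta}}=2\pi i\,\s_0\pr{1-e^{-i\delta}},
}
i.e.\ the sum with argument $1-e^{-i\delta}$, \emph{not} $\s_0\pr{-1/(1-e^{-i\delta})}$: the inverted argument can only arise from applying the functional equation to both zeta factors and changing $s\to1-s$, which is exactly the content of Theorem~\ref{tb}, and doing so is precisely what generates the $g_0$ correction. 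Your plan then applies \eqref{rfs} in the reverse direction, trading $\s_0\pr{-1/(1-e^{-i\delta})}$ for $\s_0(1-e^{-i\delta})$; carried out, it terminates in a formula containing $\s_0(1-e^{-i\delta})$, which neither matches the statement nor serves its purpose (that series converges slowly as $\delta\to0^+$, while the whole point of the theorem is to replace it by the rapidly convergent inverted sum). Moreover, your claim that $\s_0(1-e^{-i\delta})$ is analytic in $|\Re\delta|<\pi$ is false: for $\Re\delta\le0$ the point $1-e^{-i\delta}$ leaves the upper half-plane and the defining series diverges, so this term cannot be absorbed into $k(\delta)$.

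The repair is to run the argument in the forward direction, as the paper does: write $L_1(\delta)=-ie^{-i\delta/2}\int_{1/2}^{1/2+i\infty}\zeta(s)\zeta(1-s)e^{i\delta s}\,ds$, apply $\zeta(1-s)=\chi(1-s)\zeta(s)$ and split $\chi(1-s)=(2\pi)^{-s}\Gamma(s)\pr{e^{\pi is/2}+e^{-\pi is/2}}$ into a piece $L^+$ analytic for $\Re\delta>-\pi$ (by Stirling) and a piece $L^-$; complete $L^-$ to the full line $\Re s=2$ at the cost of a function $G(\delta)$ (the lower half-line plus the residue at $s=1$) analytic for $\Re\delta<\pi$ --- it is these two discarded pieces, not the double pole of $\zeta(s)^2$, that constitute $k(\delta)$ and dictate its strip of analyticity; identify the completed integral with $2\pi i\,\s_0(1-e^{-i\delta})$ as above; and only then apply \eqref{rfs} to produce $\frac1{1-e^{-i\delta}}\s_0\pr{-1/(1-e^{-i\delta})}$, the elementary terms (from which $\frac{\gamma-\log2\pi\delta}{2\sin(\delta/2)}$ is extracted, the leftover $\log\pr{(1-e^{-i\delta})/\delta}$ being analytic in $|\Re\delta|<\pi$ and joining $k$), and $\frac i2g_0(1-e^{-i\delta})$, which after the prefactors becomes $h(\delta)=i\pi e^{-i\delta/2}g_0(1-e^{-i\delta})$. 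With that reversal, your concluding analysis of $h$ goes through essentially verbatim.
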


The most remarkable aspect of this theorem lies in the fact that the arithmetic sum $\s_0\left(\frac {-1}{1-e^{-i\delta}}\right)$ decays exponentially fast for $\delta\rightarrow0^+$, while the Fourier series $h(\delta)$ is very rapidly convergent. Moreover, Theorem~\ref{tr} implies that $L_1(\delta)$ can be evaluated to any given precision in a time which is independent of $\delta.$

For a rational number $\frac hk$, $(h,k)=1$, $k>0$, define
\est{
c_0\pr{\frac hk}=-\sum_{m=1}^{k-1}\frac mk\cot\pr{\frac{\pi  mh}{k}}.
}
The value of $c_0\pr{\frac hk}$ is an algebraic number, i.e. $c_0:\Q\rightarrow\overline\Q$, and, more precisely, $c_\ell\pr{\frac hk}$ is contained in the cyclotomic field containing all roots of
unity. Moreover, $c_0$ is odd and is periodic of period $1$.
{
\begin{figure}[ht]
\begin{minipage}[b]{0.5\linewidth}
\centering
\includegraphics[height=120pt, width=190pt]{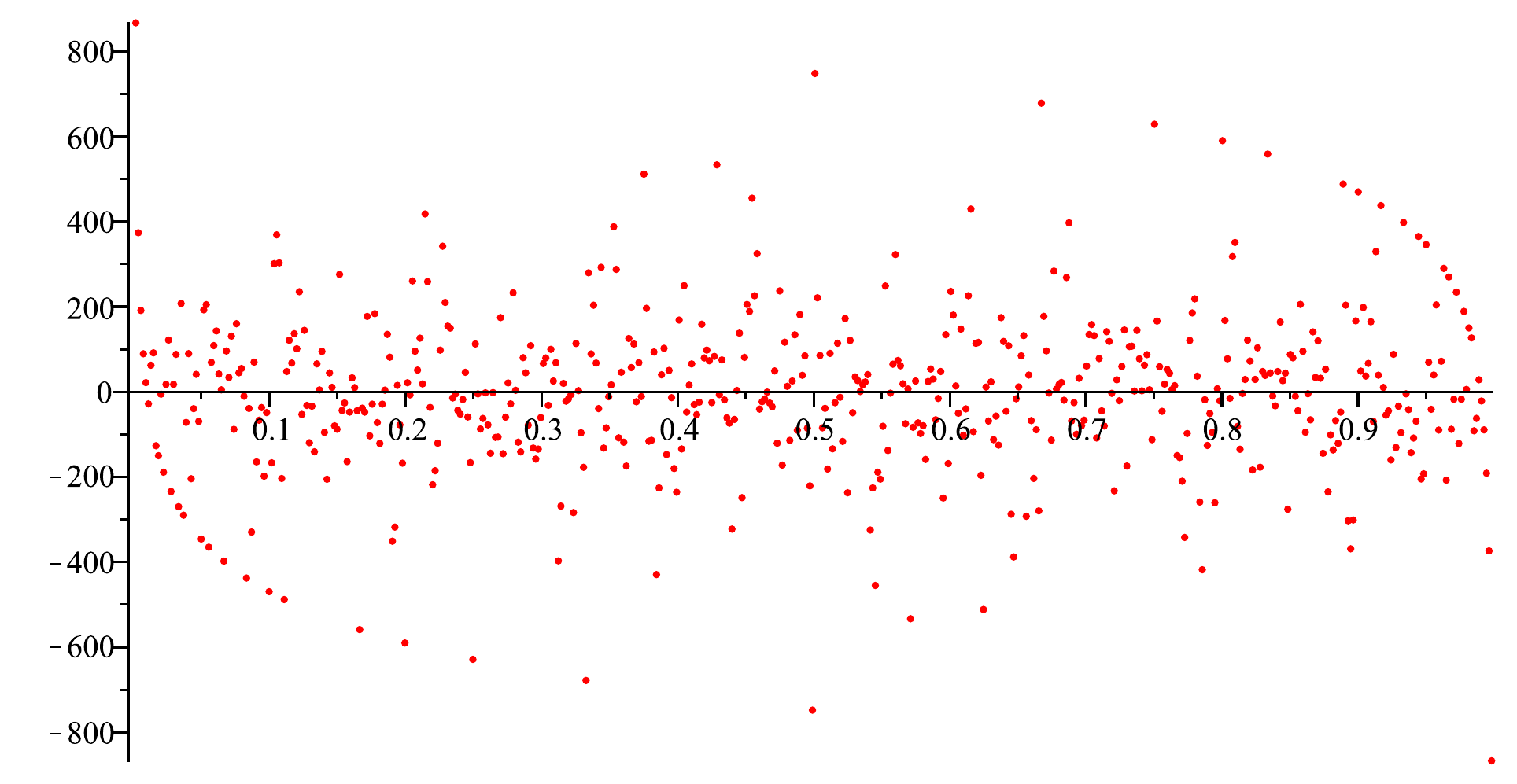}
\caption{Graph of $c_0\pr{\frac hk}$ for $1\leq h<k=541$.}
\label{fig:F1}
\end{minipage} 
\hspace{0.5cm}
\begin{minipage}[b]{0.5\linewidth}
\centering
\includegraphics[width=190pt]{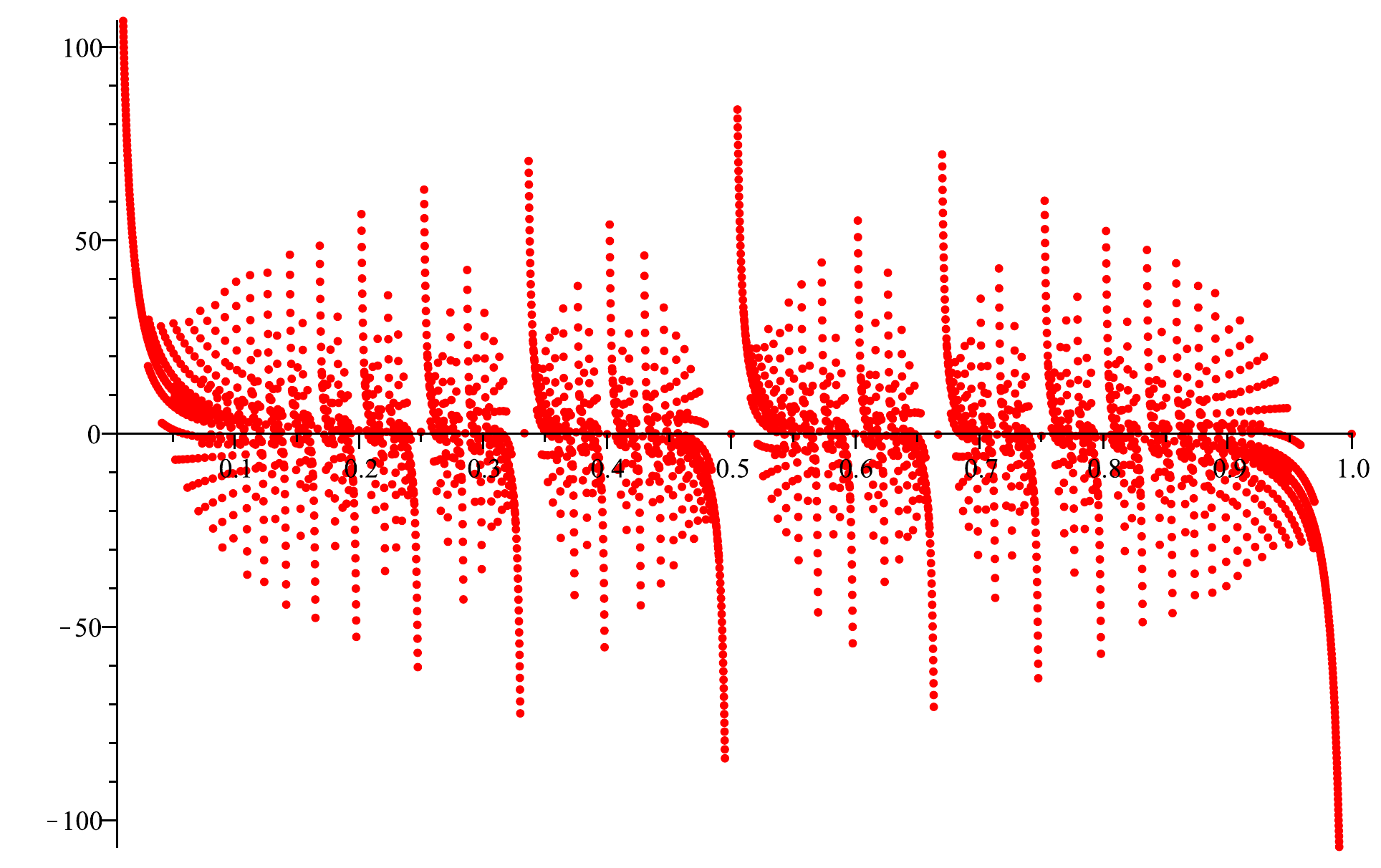}
\caption{Graph of $c_0\pr{\frac hk}$ for $1\leq h\leq k\leq 100$, $(h,k)=1$.}
\label{fig:F2}
\end{minipage}
\end{figure}
}
The cotangent sum $c_0\pr{\frac hk}$ arises in analytic number theory in the value at $s=0$,
\es{\label{dc}
D\pr{0,\frac hk}=\frac 14 +\frac i2 c_{0}\pr{\frac hk},
}
of the Estermann function, defined for $\Re(s)>1$ by
\est{
D\pr{s,\frac hk}:=\sum_{n=1}^{\infty}\frac{d(n)\e{n h/k}}{n^s}.
}
The Estermann function extends analytically to $\C\setminus\{1\}$ and satisfies a functional equation; these properties are useful in studying the asymptotics of the mean square of the Riemann zeta function multiplied by a Dirichlet polynomial (see~\cite{BCHB}), which are needed, for example, for theorems which give a lower bound for the portion of zeros of $\zeta(s)$ on the critical line. See also~\cite{C} and~\cite{Iw}. The sum
\est{
V\pr{\frac hk}:=\sum_{m=1}^{k-1}\pg{\frac {mh}k}\cot\pr{\frac{\pi  m}{k}}=-c_0\pr{\frac {\overline h}k},
}
known as the Vasyunin sum, arises in the study of the Riemann zeta function by virtue of the formula:
\es{\label{vasf}
\nu\pr{\frac hk}:=&\,\frac1{2\pi\sqrt{hk}}\int_{-\infty}^\infty\pmd{\zeta\pr{\frac12+it}}^2\pr{\frac hk}^{it}\frac{dt}{\frac14+t^2}\\
=&\frac{\log2\pi-\gamma}2\pr{\frac1h+\frac1k}+\frac{k-h}{2hk}\log\frac hk-\frac{\pi}{2hk}\pr{V\pr{\frac hk}+V\pr{\frac kh}}.
}
{
\begin{figure}[ht]
\begin{minipage}[b]{0.5\linewidth}
\centering
\includegraphics[height=108pt,width=190pt]{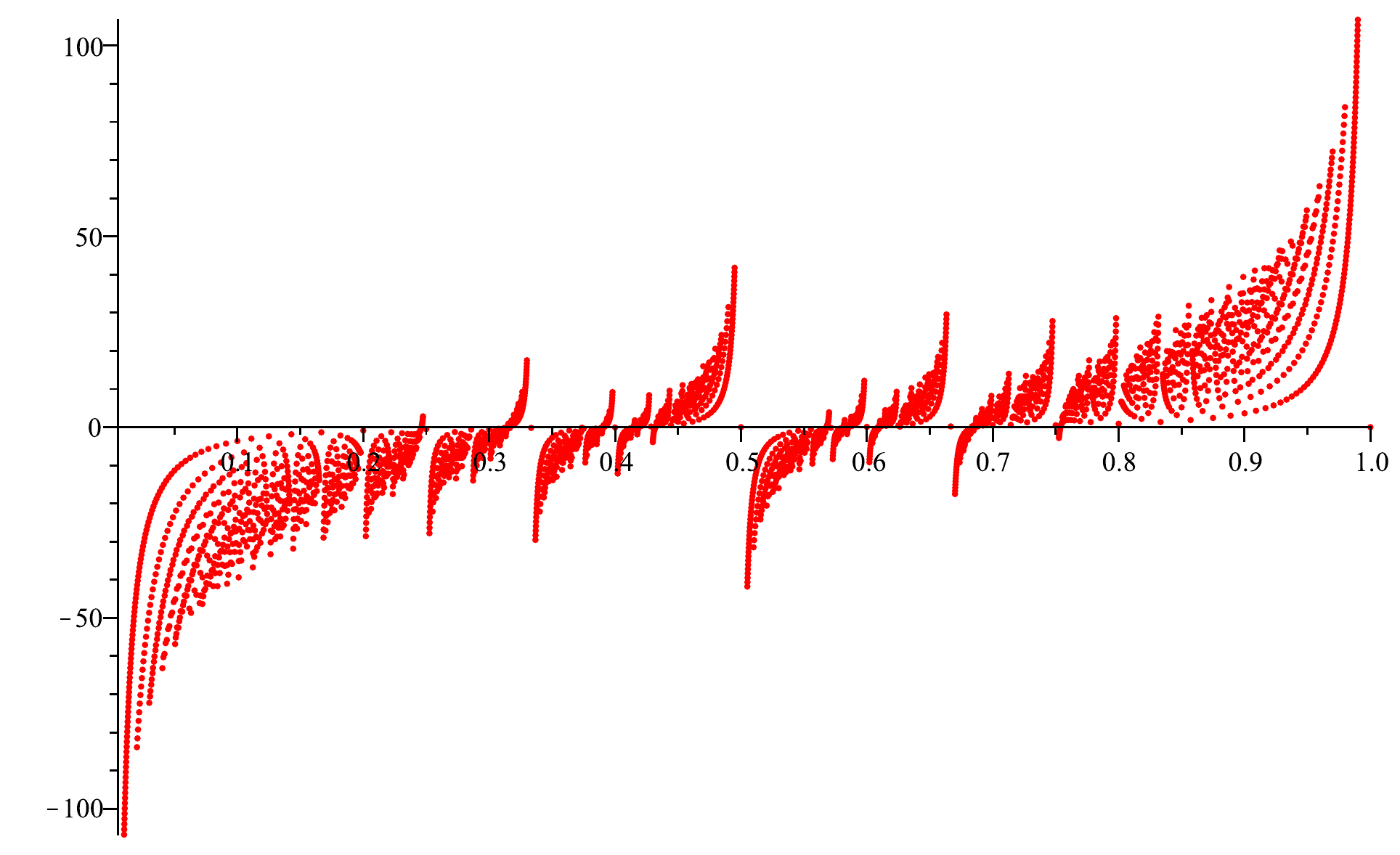}
\caption{Graph of $V\pr{\frac hk}$ for $1\leq h,k\leq 100$ and $(h,k)=1$.}
\label{fig:F3}
\end{minipage}
\hspace{0.5cm}
\begin{minipage}[b]{0.5\linewidth}
\centering
\includegraphics[width=190pt]{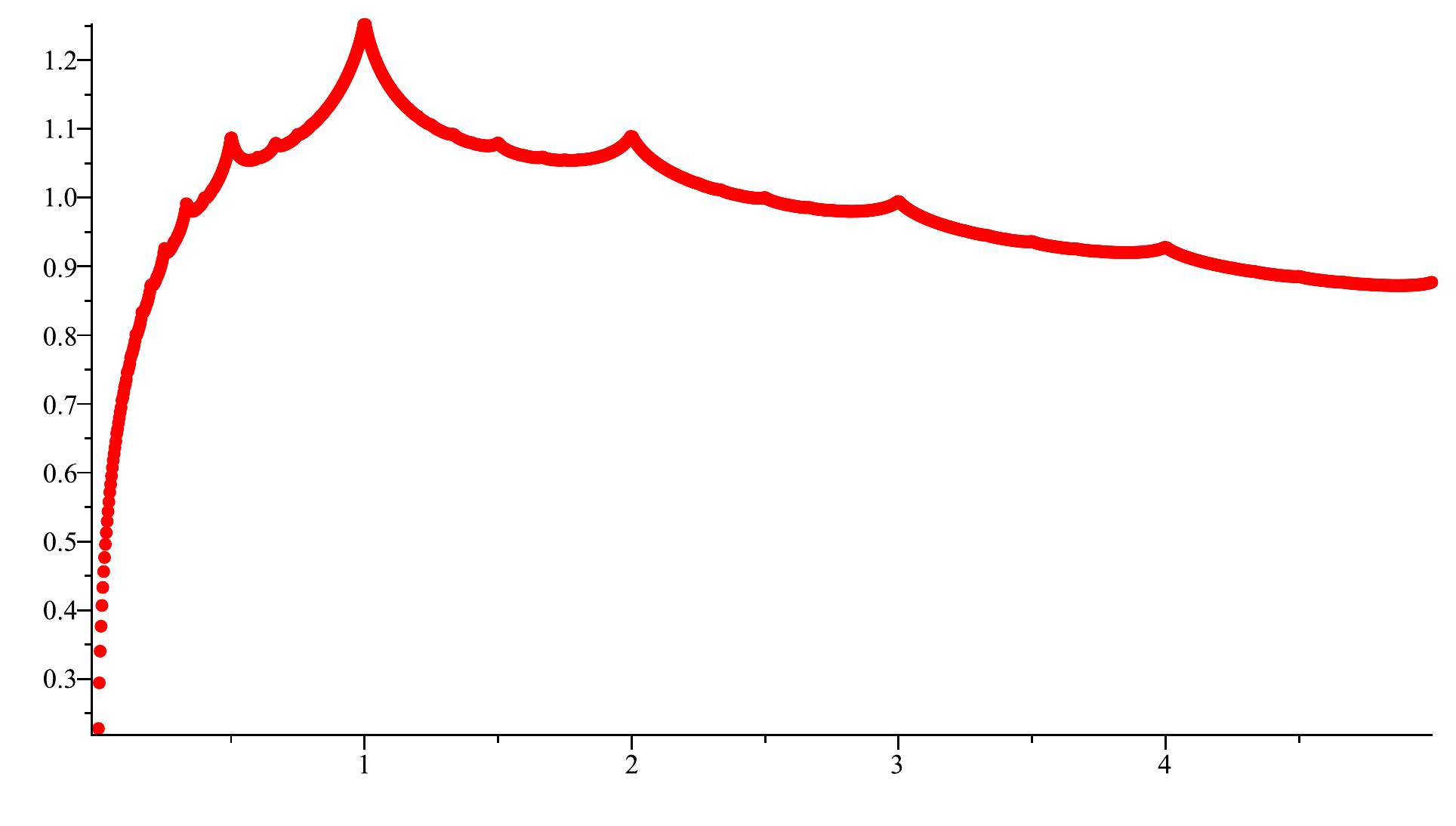}
\caption{Graph of $\sqrt{hk}\,\nu\pr{\frac hk}$ for $1\leq h\leq 5k$, $k=307$, $(h,k)=1$.}
\label{fig:F4}
\end{minipage}
\end{figure}
}
This formula is relevant to the Nyman-Beurling-Baez-Duarte-Vasyunin approach to the Riemann hypothesis which asserts that the Riemann hypothesis is true if and only if $\lim_{N\rightarrow\infty}d_N=0,$ where
\est{
d_N^2=\inf_{A_N}\frac1{2\pi}\int_{-\infty}^{\infty}\pmd{1-\zeta A_N\,\pr{\frac12+it}}^2\frac{dt}{\frac 14+t^2}
}
and the inf is over all the Dirichlet polynomials $A_N(s)=\sum_{n=1}^{N}\frac{a_n}{n^{s}}$ of length $N$; see~\cite{Bag} for a nice account of the Nyman-Beurling approach to the Riemann hypothesis with Baez-Duarte's significant contribution and see~\cite{BDBLS} and~\cite{LR} for information about the Vasyunin sums, as well as interesting numerical experiments about $d_N$ and the minimizing polynomials $A_N$. Thus $d_N^2$ is a quadratic expression in the unknown quantities $a_m$ in terms of the Vasyunin sums.

In~\cite{BC} we showed that $c_0\pr{\frac hk}$ satisfies the reciprocity formula
\es{\label{fc0}
c_0\pr{\frac hk}+\frac kh c_{0}\pr{\frac kh}-\frac1{\pi h}&=\frac i2\psi_{0}\pr{\frac hk}
}
(and in particular that $c_0\pr{\frac hk}$ can be computed to within a prescribed accuracy in a time that is polynomial in $\log k$).

{
\begin{figure}[h!]
  \centering
\includegraphics[scale=0.40]{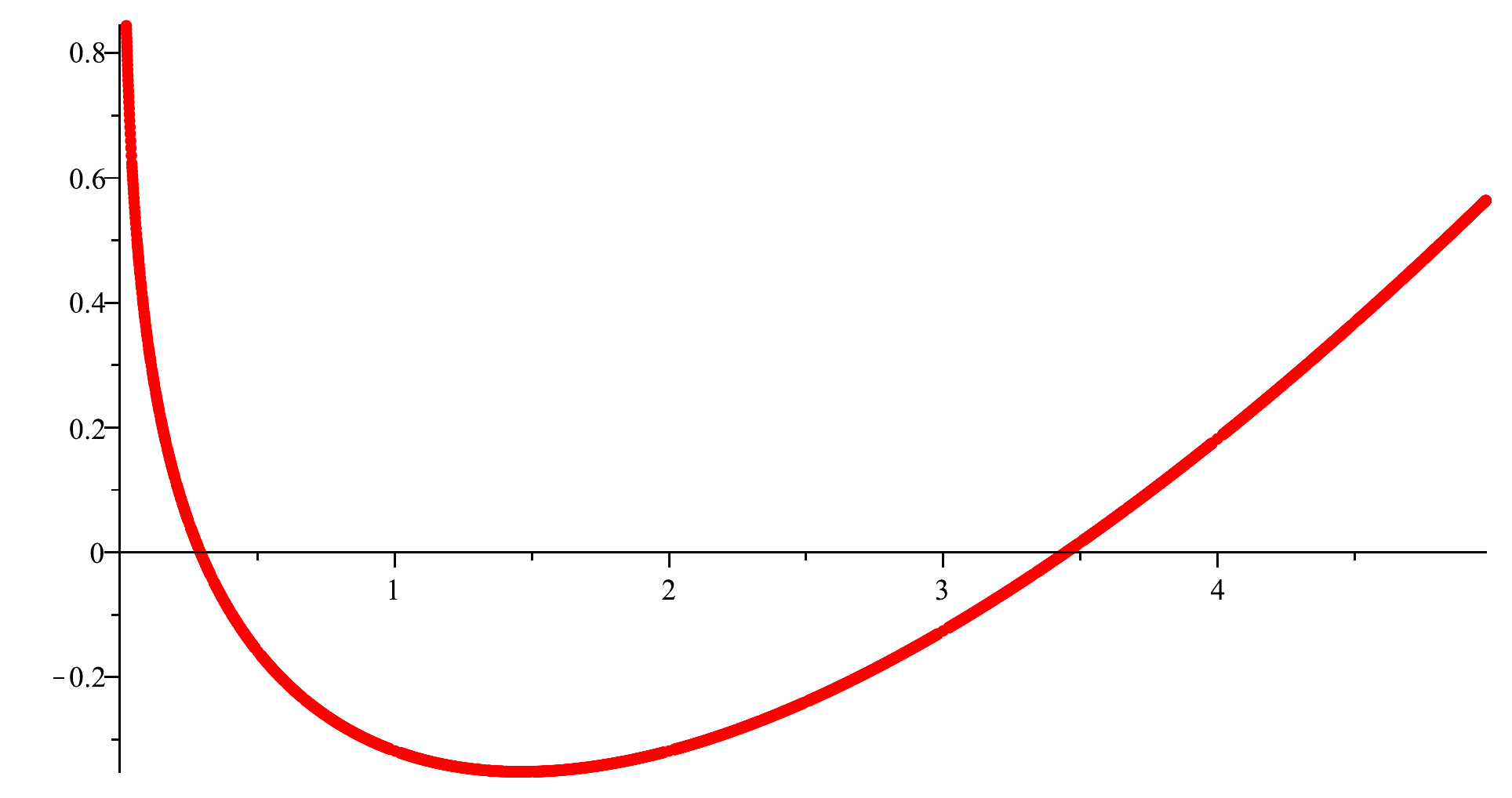}
  \caption{Graph of $c_0\pr{\frac hk}+\frac kh c_{0}\pr{\frac kh}-\frac1{\pi h}$ for $h\leq 5k$, $k\leq50$ and $(h,k)=1$.}
  \label{fig:F5}
\end{figure}
} 

This behavior is analogous to that of the Dedekind sum,
\est{
s\pr{\frac hk}=-\frac1{4k}\sum_{m=1}^{k-1}\cot\pr{\frac {\pi m}k}\cot\pr{\frac{\pi m h}k},
}
which satisfies the well known reciprocity formula
\es{\label{rs}
s\pr{\frac hk}+s\pr{\frac kh}-\frac{1}{12hk}=\frac1{12}\pr{\frac{h}{k}+\frac{k}{h}-3}.
}

In this paper we prove that these results can be generalized to the sums
\est{
c_a\pr{\frac hk}:=k^a\sum_{m=1}^{k-1}\cot\pr{\frac{\pi  mh}{k}}\zeta\pr{-a,\frac mk},
}
where $\zeta(s,x)$ is the Hurwitz zeta function (note that at $a=-1$ the poles of $\zeta\pr{-a,\frac mk}$ cancel). 

Notice that, for all $a$, $c_a\pr{\frac hk}$ is odd and periodic in $x=\frac hk$ with period 1 and, for non-negative integers $a$, it takes values in the cyclotomic field containing all roots of
unity.

At the non-negative integers, $a=n\geq0$, these cotangent sums can be expressed in terms of the Bernoulli polynomials,
\est{
c_{n}\pr{\frac hk}=-k^n\sum_{m=1}^{k-1}\cot\pr{\frac{\pi  mh}{k}}\frac{B_{n+1}\pr{\frac mk}}{n+1},
}
most interestingly in the case when $n$ is even, since $c_n\equiv0$ for positive odd $n$.

If $a=-n$ is a negative integer one can write $c_a$ as
\est{
c_{-n}\pr{\frac hk}=\frac{(-1)^{n}}{k^n(n-1)!}\sum_{m=1}^{k-1}\cot\pr{\frac{\pi  mh}{k}}\Psi\pr{n-1,\frac mk},
}
where $\Psi(m,z):=\frac{d^{m+1}}{dz^{m+1}}\log\Gamma(z)$ is the polygamma function.

By the reflection formula for the polygamma function,
\est{
\Psi(m,1-z)+(-1)^{m+1}\Psi(m,z)=(-1)^m\pi\frac{d^m}{dz^m}\cot(\pi z),
}
for a positive odd integer $n$ we can write $c_{-n}$ as
\est{
c_{-n}\pr{\frac hk}=-\frac{\pi}{2k^n(n-1)!}\sum_{m=1}^{k-1}\cot\pr{\frac{\pi  mh}{k}}\left.\frac{d^{n-1}}{dz^{n-1}}\cot\pr{\pi z}\right|_{z=\frac mk}
}
and, in particular,
\est{
c_{-1}\pr{\frac hk}=2\pi s\pr{\frac hk}.
}

Like the case $a=0$, these cotangent sums appear in the value at $s=0$,
\es{
D\pr{0,a,\frac hk}=-\frac12{\zeta(-a)} +\frac i2 c_{a}\pr{\frac hk},
}
of the function $D\pr{s,a,\frac hk}$, defined for $\Re(s)>1$ by
\est{
D\pr{s,a,\frac hk}:=\sum_{n=1}^{\infty}\frac{\sigma_a(n)\e{n h/k}}{n^s}.
}
Moreover, the cotangent sums $c_a$ appear also in a shifted version of the Vasyunin formula~\eqref{vasf} (see Theorem~\ref{vasp} at the end of the paper for a new analytic proof).

\begin{theo}\label{tca}
Let $h,k\geq1$, $(h,k)=1$. Then
\es{\label{fca}
c_a\pr{\frac hk}-\pr{\frac kh}^{1+a} c_{a}\pr{\frac {-k}h}+k^a\frac{a\zeta(1-a)}{\pi h}=-i\zeta(-a)\psi_a\pr{\frac hk}.
}
\end{theo}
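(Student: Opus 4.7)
The plan is to prove Theorem~\ref{tca} by specializing the period relation~(\ref{rfs}) to $z=h/k$ and regularizing the (divergent) value of $\s_a$ at this rational point through the Estermann function $D(s,a,h/k):=\sum_{n\ge 1}\sigma_a(n)\e{nh/k}n^{-s}$. The bridge between the cotangent sum and the Eisenstein series is the generalization of~(\ref{dc}),
\[
D(0,a,h/k)=-\tfrac12\zeta(-a)+\tfrac{i}{2}c_a(h/k),
\]
which I would establish by expressing $D(s,a,h/k)$ via the decomposition $\sum_d d^{a-s}\sum_m\e{dmh/k}/m^s$ in terms of Hurwitz zetas $\zeta(s,m/k)$, and then evaluating at $s=0$ using $\zeta(0,x)=\tfrac12-x$.

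To extract this value analytically, I would use the Mellin--Barnes representation
\[
\s_a(h/k+u)=\frac{1}{2\pi i}\int_{(c)}\Gamma(s)(-2\pi i u)^{-s}D(s,a,h/k)\,ds,\qquad c>\max(1,1+\Re a),
\]
valid for $u$ in the upper half plane, and shift the contour to $\Re s=-\epsilon$. This picks up residues at $s=1$ and $s=1+a$ (poles of $D$, with explicit residues $k^{a-1}\zeta(1-a)$ and $k^{-a-1}\zeta(1+a)$ coming from the $k\mid d$ piece) and at $s=0$ (pole of $\Gamma$, giving $D(0,a,h/k)$). An analogous expansion holds for $\s_a(-1/z)$ around $-1/z=-k/h+u'$ with $u'=(k/h)^{2}u+O(u^{2})$, whose constant-in-$u$ term is $D(0,a,-k/h)$.

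Substituting both expansions into~(\ref{rfs}) and comparing coefficients in $v:=-iu$, the singular $v^{-1}$ and $v^{-(1+a)}$ terms on the left cancel among themselves (providing a consistency check on the residues), as do the spurious $v^{-a}$ cross terms between the Taylor expansion of $z^{-(a+1)}$ and the singular parts of $\s_a(-1/z)$. The constant-in-$v$ terms then give the identity, with the crucial new ingredient being the cross between the first Taylor correction $z^{-(a+1)}=(k/h)^{a+1}(1-i(a+1)kv/h+O(v^{2}))$ and the $v^{-1}$ singular part of $\s_a(-1/z)$: this produces the finite contribution $ia\zeta(1-a)k^{a}/(2\pi h)$, which after multiplication by $-2i$ becomes precisely the $k^{a}a\zeta(1-a)/(\pi h)$ term in~(\ref{fca}). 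Using the functional equation $\zeta(a+1)\Gamma(a+1)=-(2\pi)^{a+1}\zeta(-a)/(2\sin(\pi a/2))$ on the right-hand side of~(\ref{rfs}), together with the explicit form of $\psi_a$ from Theorem~\ref{tb}, then yields~(\ref{fca}).

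The main obstacle will be the bookkeeping of the non-integer-power expansion in $v$, in particular verifying the exact cancellation of the $v^{-a}$ cross terms (which boils down to a fortuitous algebraic identity matching $(k/h)^{a+1}(h/k)^{2a+1}$ against $(k/h)^{a+2}(h/k)^{2(1+a)}$ weighted by the residue $k^{-a-1}\zeta(1+a)$), and checking that the imaginary parts of $e^{i\pi(a+1)/2}$ combine with the $\zeta(-a)$ coefficient from the expansion so that only the real $\cot(\pi a/2)$ piece survives. Once these cancellations are in place, the remaining algebra is straightforward; the delicate point is simply being careful with the branches of $u\mapsto(-iu)^{-s}$ and with the precise form of the sub-leading Taylor expansion of $-1/z$.
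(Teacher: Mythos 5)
Your proposal is correct and follows essentially the same route as the paper's proof: regularize $\s_a$ at the rational point via the Estermann function $D(s,a,h/k)$, shift the Mellin--Barnes contour past the poles at $s=1$, $s=1+a$, $s=0$, substitute into the period relation, and observe that the singular terms cancel while the subleading term of $z^{-(1+a)}$ against the $s=1$ residue produces exactly the $ia\zeta(1-a)k^a/(2\pi h)$ correction (the paper carries this out with $z=\tfrac hk(1+i\delta)$, $\delta\to0^+$, where your ``$v^{-a}$ cross terms'' cancel identically because the $(1+i\delta)^{\pm(1+a)}$ factors cancel). The only points to add are the initial restriction $0\neq|a|<1$, with the general case following by analytic continuation.
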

(Note that, since $g_{-1}(z)$ is identically zero, for $a=-1$ the reciprocity formula reduces to~\eqref{rs}.) In particular, $c_a\pr{\frac hk}$ gives an example of an ``imperfect" quantum modular form of weight $1+a$.

New formulae can be obtained by differentiating~\eqref{fca}; for example, writing
\est{
c_{-1}^*\pr{\frac hk}:=\frac1{k}\sum_{m=1}^{k-1}\cot\pr{\frac{\pi  mh}{k}}\gamma_1\pr{\frac mk},
}
where $\gamma_1(x)$ is the first generalized Stieltjes constant defined by
\est{
\zeta(s,x)=\frac{1}{s-1}+\sum_{n=0}^\infty \frac{(-1)^n}{n!} \gamma_n(x) (s-1)^n,
}
taking the derivative at $-1$ of~\eqref{fca} multiplied by $k^{-a}$ we get the formula
\est{
c_{-1}^*\pr{\frac hk}&-c_{-1}^*\pr{\frac {-k}h}+\frac{\zeta'(2)+\frac{\pi^2}{6}}{\pi kh}+\pi\log k\pr{\frac {1}{6 }\frac kh-\frac12}=q\pr{\frac hk},
}
where
\est{
q(z):=-\frac{1}{\pi z}\zeta'(2)+\frac\pi2\pr{\log z+\gamma}+g'_{-1}\pr{z}
}
is holomorphic in $\C'$.

\section{The period function}\label{MS}

In this section we give a proof of Theorem~\ref{tb} and~\ref{tb2}.

\begin{proof}[Proof of Theorem~\ref{tb}]
Firstly, observe that the three term relation~\eqref{3tr} follows easily from the periodicity in $z$ of $E(a,z)$.

$\s_a(z)$ can be written as
\es{\label{E(z)}
\s_a(z)&=\sum_{n=1}^{\infty}\sigma_a(n)\frac1{2\pi i}\int_{\pr{2+\max\pr{0,\Re(a)}}}\Gamma(s)(-2\pi inz)^{-s}\,ds\\
&=\frac1{2\pi i}\int_{\pr{2+\max\pr{0,\Re(a)}}}\zeta(s)\zeta(s-a)\Gamma(s)e^{\frac{\pi is}2}(2\pi z)^{-s}\,ds\\
&=\frac1{2\pi i}\int_{\pr{-\frac12-2M}}\zeta(s)\zeta(s-a)\Gamma(s)e^{\frac{\pi is}2}(2\pi z)^{-s}\,ds+r_{a,M}(z),
}
where $M$ is any integer greater or equal to $-\frac12\min\pr{0,{\Re(a)}}$ and
\est{
r_{a,M}(z):=&-\frac12\zeta(-a)+i\frac{\zeta(1-a)}{2\pi z}+i\frac{\zeta(1+a)\Gamma(1+a)e^{\frac{\pi ia}2}}{(2\pi z)^{1+a}}+\\
&+\sum_{1\leq n\leq M}i(-1)^{n}\frac{B_{2n}}{(2n)!}\zeta(1-2n-a)(2\pi z)^{2n-1}
}
is the sum of the residues encountered moving the integral (and has to be interpreted in the limit sense if some of the terms have a pole). Now, consider
\est{
\frac{1}{z^{1+a}}\s_a\pr{-\frac1z}&=\frac{1}{z^{1+a}}\frac1{2\pi i}\int_{\pr{2+\max\pr{0,\Re(a)}}}\zeta(s)\zeta(s-a)\Gamma(s)e^{\frac{\pi is}2}\pr{2\pi \frac{-1}z}^{-s}\,ds\\
&=\frac1{2\pi i}\int_{\pr{2+\max\pr{0,\Re(a)}}}\zeta(s)\zeta(s-a)\Gamma(s)e^{-\frac{\pi is}2}\pr{2\pi}^{-s}z^{s-1-a}\,ds,\\
}
since in this context $0<\arg z<\pi$ and $0<\arg \frac{-1}z<\pi$, so the identity $\arg\frac{-1}z=\pi-\arg z$ holds. Applying the functional equation to both $\zeta(s)$ and $\zeta(s-a)$ we get, after the change of variable $s\rightarrow1-s+a$,
\es{\label{E(-1/z)}
\frac{1}{z^{1+a}}\s_a\pr{-\frac1z}&=-\frac1{2\pi}\int_{\pr{- 1+\min\pr{0,\Re(a)}}}\zeta(s-a)\zeta(s)\Gamma(s)\frac{e^{\frac{\pi i\pr{s-a}}2}\cos\frac {\pi s}2}{\sin\frac {\pi \pr{s-a}}2}(2\pi z)^{-s}\,ds\\
&=-\frac1{2\pi}\int_{\pr{- \frac12-M}}\zeta(s-a)\zeta(s)\Gamma(s)\frac{e^{\frac{\pi i\pr{s-a}}2}\cos\frac {\pi s}2}{\sin\frac {\pi \pr{s-a}}2}(2\pi z)^{-s}\,ds,
}
since the integrand doesn't have any pole on the left of $- 1+\min\pr{0,\Re(a)}$. The theorem then follows summing~\eqref{E(z)} and~\eqref{E(-1/z)} and using the identity
\est{
e^{\frac{\pi is}2}+i\frac{e^{\frac{\pi i\pr{s-a}}2}\cos\frac {\pi s}2}{\sin\frac {\pi \pr{s-a}}2}=i\frac{\cos\frac{\pi a}2}{\sin\frac {\pi \pr{s-a}}2}.
}
\end{proof}

We remark that for $a=2 k+1$, $k\geq1$, Theorem~\ref{tb} reduces to
\est{
E_{2k}(z) -\frac{1}{z^{2k}} E_{2k}\pr{-\frac1z}=0,
}
while, for $a=1$, the theorem reduces to the well known identity
\est{
E_2\pr{z}-\frac1{z^2}E_2\pr{\frac{-1}z}=-\frac{12}{2\pi i z}.
}

To prove Theorem~\ref{tb2} we need the following lemma.

\begin{lemma}\label{asympt}
For fixed complex numbers $A$ and $\alpha$ we have, as $n\rightarrow\infty$
\est{
J_n:=\int_0^\infty u^{n+\alpha} e^{-A\sqrt u} e^{-u}\frac {du} u=\sqrt{2\pi}e^{\frac {A^2}8}e^{-A\sqrt n}e^{-n}n^{n+\alpha-\frac12}\pr{1-\frac C{\sqrt n}+O\pr{\frac 1n}},
}
where
\est{
C=\frac{4\alpha-1}{8}A+\frac{A^3}{96}.
}
\end{lemma}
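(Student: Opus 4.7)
The plan is to prove the lemma by a standard saddle-point (Laplace) analysis. Substituting $u = n s$ rescales the integral to
\est{
J_n = n^{n+\alpha}\int_0^\infty s^{n+\alpha-1}\exp\pr{-ns - A\sqrt{ns}}\,ds,
}
and then the factor $s^{n}e^{-ns}=e^{n(\log s - s)}$ has a unique, non-degenerate maximum at $s = 1$, with $\phi''(1)=-1$ for $\phi(s)=\log s - s$. Hence the integral concentrates on a window of width $\sim 1/\sqrt n$ around $s=1$, suggesting the further substitution $s = 1 + t/\sqrt n$.

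Expanding the exponent to the required order one obtains
\est{
n(\log s - s) &= -n - \tfrac12 t^2 + \tfrac{t^3}{3\sqrt n} + O\pr{\tfrac{t^4}{n}},\\
-A\sqrt{ns} &= -A\sqrt n - \tfrac{A}{2}t + \tfrac{A t^2}{8\sqrt n} + O\pr{\tfrac{t^3}{n}},\\
(\alpha-1)\log s &= \tfrac{(\alpha-1)t}{\sqrt n} + O\pr{\tfrac{t^2}{n}}.
}
The leading Gaussian piece is $-\tfrac12 t^2 - \tfrac{A}{2}t$; completing the square as $-\tfrac12(t+A/2)^2 + A^2/8$ produces the predicted factor $e^{A^2/8}$ and moves the saddle to $t = -A/2$. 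After shifting $t \mapsto t - A/2$ and incorporating the Jacobian $ds = dt/\sqrt n$, one is left with
\est{
J_n = n^{n+\alpha-\frac12}\,e^{-n-A\sqrt n+A^2/8}\int_{I_n} e^{-t^2/2}\pr{1 + \tfrac{Q(t)}{\sqrt n} + O\pr{\tfrac{1+t^6}{n}}}\,dt,
}
where $Q(t)$ is the cubic obtained by substituting $t \mapsto t - A/2$ into $\tfrac{t^3}{3} + \tfrac{A t^2}{8} + (\alpha-1)t$, and $I_n$ is a shifted interval expanding to $\R$.

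The leading contribution is $\int_\R e^{-t^2/2}\,dt = \sqrt{2\pi}$, yielding the claimed main term. For the $1/\sqrt n$-correction, I would evaluate $\int_\R Q(t) e^{-t^2/2}\,dt$ by using $\int_\R t^{2k} e^{-t^2/2}\,dt = \sqrt{2\pi}(2k-1)!!$ together with the vanishing of odd moments. Consolidating the $t^2$-coefficient $-\tfrac{3A}{8}$ and the constant $-\tfrac{A^3}{96}-\tfrac{A(\alpha-1)}{2}$ of $Q$ gives
\est{
\int_\R Q(t) e^{-t^2/2}\,dt = -\sqrt{2\pi}\pr{\tfrac{(4\alpha-1)A}{8} + \tfrac{A^3}{96}} = -C\sqrt{2\pi},
}
matching the stated expression for $C$.

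The main obstacle is the uniform justification of these expansions. I would split the $t$-range at $|t| = n^{1/4}$: on the inner range the Taylor remainders $O(t^4/n)$, $O(t^3/n)$, etc.\ can be exponentiated with only $O(1/n)$ error, while on the outer range the strict concavity of $\log s - s$ produces super-polynomial decay in $n$ that easily dominates the polynomial-in-$\sqrt n$ growth of $|e^{-A\sqrt{ns}}|$ even when $A$ is complex. This classical but slightly delicate bookkeeping is the only real technical point; the rest reduces to completing the square and computing elementary Gaussian moments.
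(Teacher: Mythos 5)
Your proposal is correct and follows essentially the same route as the paper: a Laplace/saddle-point analysis concentrated in a window of width $n^{-1/2}$ around the maximum, with the $1/\sqrt{n}$ correction extracted from Gaussian moments of a cubic, and your coefficient bookkeeping for $C=\frac{4\alpha-1}{8}A+\frac{A^3}{96}$ checks out. The only (cosmetic) difference is that the paper substitutes $u=nx^2$, so the factor $e^{-A\sqrt{u}}$ becomes exactly $e^{-A\sqrt{n}\,x}$ and the linear term is absorbed directly into integrals of the form $\int e^{-A\sqrt{n}\,x-2nx^2}\,dx$, whereas you take $u=ns$, Taylor-expand $\sqrt{s}$, and complete the square explicitly.
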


\begin{proof}
After the change of variable $u=nx^2$ we have
\est{
J_n=&\,2n^{n+\alpha}\int_0^\infty x^{2\alpha-1} e^{-A\sqrt{n}x-n\pr{x^2-2\log x}} {dx} \\
=&\,2n^{n+\alpha}e^{-A\sqrt{n}}\int_{-1}^\infty (x+1)^{2\alpha-1} e^{-A\sqrt{n}x-n\pr{(x+1)^2-2\log (x+1)}} {dx}\\
=&\,2n^{n+\alpha}e^{-A\sqrt{n}}e^{-n}\pr{1+O\pr{e^{-\frac{n\delta^2}2}}}\times\\
&\times\int_{-\delta}^\delta (x+1)^{2\alpha-1} e^{-A\sqrt{n}x-2nx^2}\pr{1+\frac{2nx^3}{3}+O\pr{nx^4} }{dx},
}
for any small $\delta>0$. We can then approximate the binomial and extend the integral to $\R$ at a negligible cost, getting
\est{
J_n&=2n^{n+\alpha}e^{-A\sqrt{n}}e^{-n}\int_{-\infty}^\infty \pr{1+\pr{2\alpha-1}x+\frac{2nx^3}{3}+O\pr{x^2+nx^4}}\times\mbox{}\\
&\hspace{120pt}\times e^{-A\sqrt{n}x-2nx^2}{dx}.\\
}
Evaluating the integrals, the lemma follows.
\end{proof}

\begin{proof}[Proof of Theorem~\ref{tb2}]
The three term relation~\eqref{3tr} implies that
\est{
g_a(z+1)=&\,\frac1{(z+1)^{1+a}}\cot\frac{\pi a}2\zeta(-a)-\frac1{\pi z(z+1)^{a}}\zeta(1-a)+\mbox{}\\
&+\frac 1{\pi z(z+1)}\zeta(1-a)+g_a(z)-\frac1{(z+1)^{1+a}}g_a\pr{\frac z{z+1}}.\\
}
Now, from the definition~\eqref{g_a} of $g_a(z)$, it follows that
\est{
g_a(z)=2\sum_{1\leq n\leq M}(-1)^{n}\frac{B_{2n}}{(2n)!}\zeta(1-2n-a)(2\pi z)^{2n-1}+O\pr{|z|^{2M+\frac12}},
}
for any $M\geq 1$. Thus
\est{
&g_a(z)-\frac{g_a\pr{\frac z{z+1}}}{(z+1)^{1+a}}=\\
&=2\sum_{1\leq n\leq M}(-1)^{n}\frac{B_{2n}}{(2n)!}\zeta(1-2n-a)(2\pi z)^{2n-1}\pr{1-\frac1{(z+1)^{2n+a}}}+O\pr{|z|^{2M+\frac12}}\\
&=-2\sum_{m=2}^{2M} \Bigg(\sum_{\substack{2n-1+k=m,\\ n, k\geq1}}(-1)^{n+m}B_{2n}\zeta(1-2n-a)\frac{\Gamma(2n+a+k)}{\Gamma(2n+a)k!(2n)!}(2\pi )^{2n-1}\Bigg)z^m\\
&\quad+O\pr{|z|^{2M+\frac12}}.
}
Therefore,
\est{
g_a(z+1)=\sum_{m=0}^{2M}b_m z^m+O(|z|^{2M+\frac12}),
}
where
\est{
b_m&:=-2\sum_{\substack{2n-1+k=m,\\ n, k\geq1}}(-1)^{n+k}B_{2n}\zeta(1-2n-a)\frac{\Gamma(2n+a+k)}{\Gamma(2n+a)k!(2n)!}(2\pi )^{2n-1}+\\
&\quad+(-1)^m\cot\frac{\pi a}2\zeta(-a)\frac{\Gamma(1+a+m)}{\Gamma(1+a)m!}+\mbox{}\\
&\quad+(-1)^m\pr{\frac{\Gamma(1+a+m)}{\Gamma(a)(m+1)!}-1}\frac{\zeta(1-a)}{\pi},
}
and, since $g_a(z)$ is holomorphic at 1, $b_m$ must coincide with the $m$-th coefficient of the Taylor series of $g_a(z)$ at 1.

Now, let's prove the asymptotic~\eqref{aseq}. Fix any $M\geq -\frac12\min\pr{0,{\Re(a)}}$ and assume $m\geq2M+1$ and $\Re(\tau)>0$. By the functional equation for $\zeta$ and basic properties of $\Gamma(s)$, we have
\est{
&\frac{(2\pi)^a\tau^m}{\cos\frac{\pi a}2}g^{\pr{m}}_a(\tau)=\\
&\quad=\frac{(-1)^m}{\pi i}\int_{\pr{-\frac12-2M}}\Gamma(s)\frac{\zeta(s)\zeta(s-a)}{\sin\frac {\pi \pr{s-a}}2}s\pr{s+1}\cdots\pr{s+m-1}(2\pi)^{-s+a}\tau^{-s}\,ds\\
&\quad=\frac{(-1)^m}{\pi i}\int_{\pr{-\frac12-2M}}\frac{\zeta(s)\zeta(s-a)}{\sin\frac {\pi \pr{s-a}}2}\Gamma(s+m)(2\pi)^{-s+a}\tau^{-s}\,ds\\
&\quad=\frac{(-1)^m}{\pi^3 i}\int_{\pr{-\frac12-2M}}\zeta(1-s)\zeta(1-s+a)\times\mbox{}\\
&\hspace{90pt}\times\Gamma(1-s)\Gamma(1-s+a)\Gamma(s+m)\sin\frac{\pi s}2 \pr{\frac{2\pi}\tau}^{s}\,ds.\\
}
We can see immediately that $g^{\pr{m}}_a(\tau)\ll_am^{-B}|\tau|^{-m}m!$ for any fixed $B>0$, just by moving the path of integration to the line $\Re(s)=-B$ and using trivial estimates for $\Gamma$. To get a formula which is asymptotic as $m\rightarrow\infty$ we expand $\zeta(1-s)\zeta(1-s+a)$ into a Dirichlet series and integrate term-by-term; the main term arises from the first term of the sum. We have
\est{
g^{\pr{m}}_a(\tau)=2\frac{(-\tau)^{-m}\cos\frac{\pi a}2}{\pi^2(2\pi)^a}\sum_{\ell=1}^{\infty}\frac{\sigma_{-a}(\ell)}{\ell} I_{m,a}\pr{\frac {\ell}{\tau}},
}
where
\est{
I_{m,a}(x):=\frac1{2\pi i}\int_{\pr{-\frac12-2M}}\Gamma(1-s)\Gamma(1-s+a)\Gamma(s+m)\sin\frac{\pi s}2 (2\pi x)^{s}\,ds.
}
We re-express this integral as a convolution integral. Recall that for $|\arg x|<\pi$  we have
\est{
\frac1{2\pi i}\int_{\pr{\frac32+2M}}\Gamma(s)\Gamma(s+a) u^{-s}\,ds=2u^{\frac a2}K_{a}\pr{2\sqrt u},
}
where $K_a$ denotes the K-Bessel function of order $a$. Also,
\est{
\frac1{2\pi i}\int_{\pr{-\frac12-2M}}\Gamma(s+m) u^{-s}\,ds=u^{m}e^{-u}.
}
Thus,
\est{
I_{m,a}(x)=I_{m,a}^{+}(x)+I_{m,a}^{-}(x),
}
where
\est{
I_{m,a}^{\pm}(x)=(2\pi x)^{1+\frac a2}e^{\frac{\pm \pi i a}4}\int_0^\infty u^{m+\frac a2}K_{a}\pr{2e^{\pm \frac{\pi i}4}\sqrt {2\pi x u}}e^{-u}\,du.
}
Now, for $|\arg z|<\frac32\pi$
\est{
K_a(z)=\sqrt{\frac{\pi}{2z}}e^{-z}\pr{1+\frac{4a^2-1}{8z}+O_a\pr{\frac1{|z|^2}}},\\
}
as $z\rightarrow\infty$, and
\est{
K_{-a}(z)=K_a(z)\sim
\begin{cases}
2^{a-1}\Gamma(a)z^{-a}, &\text{if $\Re(a)\geq 0$, $a\neq0$,}\\
-\log\frac x2 -\gamma,&\text{if $a=0$,}\\
\end{cases}
}
as $z\rightarrow0$. Therefore, by Lemma~\ref{asympt},
\est{
I_{m,a}^\pm (x)=&\,(2\pi x)^{1+\frac a2}\frac{\pi^\frac14 e^{\frac{\pm \pi i \pr{a-\frac12}}4}}{2^\frac54  {x}^\frac14}\int_0^\infty u^{m+\frac a2-\frac14}e^{-u-2(1\pm i)\sqrt {\pi x u}}	\times\\
&\hspace{70pt}\times \pr{1+\frac{4a^2-1}{2^\frac92\pi^\frac12 e^{\pm\frac{\pi i}4}\sqrt {x u}}+O_a\pr{\frac1{u}}}\,du\\
\sim&\, 2^{\frac14+\frac a2}\pi^{\frac74+\frac a2} e^{\frac{\pm \pi i \pr{a-\frac12}}4}x^{\frac34+\frac a2}e^{ {\pm i\pi x}}e^{-2(1\pm i)\sqrt {\pi xn}} e^{-m}m^{m+\frac14 +\frac a2}\times\mbox{}\\
&\hspace{70pt}\times\pr{1+\frac {\xi^\pm}{\sqrt m}+O\pr{\frac 1m}},\\
}
where
\est{
\xi^\pm=-\frac{(1\pm i)\sqrt{\pi x}(1+a)}{2}+\frac{(1\mp i)(\pi x)^\frac 32}{6}+\frac{\pr{4a^2-1}(1\mp i)}{32 \pi^\frac12 \sqrt x},
}
and~\eqref{aseq} follows.
\end{proof}

\section{An extension of Voronoi's formula}\label{VF}

Formula~\eqref{fbF} can be proved with the same techniques used to prove Theorems~\ref{tb} and~\ref{tb2}. In this section we give an application of this formula and we discuss a similar formula for convolutions of the exponential function. We conclude the section showing how these results can be used to prove Voronoi's formula.

Applying formula~\eqref{fbF} to $F(s)=\frac{\Gamma\pr{\frac{s}2}}{2\Gamma(s)}$ we get, for $\frac{\pi }4<\arg (z)<\frac 34\pi$,
\es{\label{exap}
\sum_{n=1}^\infty d(n)e^{(2\pi n z)^2}=\frac1z\sum_{n=1}^\infty  d(n)T(4\pi n z)+R(z)+k(z),
}
where, for $\frac{\pi }4<\arg (z)<\frac 34\pi$,
\est{
T(z):=&\frac1{\sqrt{\pi} i}\int_{\pr 2}\frac{\Gamma(s)}{\Gamma\pr{1-\frac {s}2}}(-iz)^{-s}\,ds=\sum_{n=0}^{\infty}\frac{(iz)^n}{n!\Gamma\pr{1+\frac n2}}\\
}
and
\est{
R(z):=&=\frac14+\frac{2\log \pr{-4\pi iz}-3\gamma}{8\sqrt \pi i z},\\
k(z):=&\frac1{4\pi^2 }\int_{\pr{-\frac12}}\Gamma\pr{\frac s2}\Gamma(1-s)\zeta(s)\zeta(1-s)z^{-s}\,ds.
}
Notice that we have $T(z)\ll |z|^{-B}$ for all fixed $B>0$; moreover, $k(z)$ is holomorphic in $|\arg (z)|<\frac 34\pi$ and, if $|\arg\pr{\tau}|<\frac{\pi}{4}$,
\est{
c_\tau(m):= \frac{k^{(m)}(\tau)}{m!}\ll |\tau|^{-m} m^{-B}
}
for all $B>0$. In particular, if we set $z=i\delta$ with $0<\delta\leq1$, taking the real part of~\eqref{exap} we get
\es{
\sum_{n=1}^\infty d(n)e^{-(2\pi n \delta)^2}=\frac14+\frac{-2\log(4\pi \delta)-3\gamma}{4\sqrt\pi \delta}+\Re\sum_{m=0}^\infty c_m\pr{\frac{ \sqrt 3}2+i\pr{\frac12-\delta}}^m
}
with 
\est{
c_m:=c_{\frac{\sqrt3+i}2}(m)\ll m^{-B}.
}
for all $B>0$. 

We now state a similar formula for convolutions of the exponential function and a function that is compactly supported on $\R_{>0}$. 

Let $g(x)$ be a compactly supported function on $\R_{>0}$ and let
\est{
W_{+}\pr{z}:=&\,\int_0^{\infty}f\pr{\frac1x}\e{zx}\,\frac{dx}{x} \\
W_{-}\pr{z}:=&\, \int_0^{\infty}f\pr{x}\e{zx}\,dx.\\
}
If we denote the Mellin transform of $f(x)$ with $F(s)$, then it follows that $F(s)$ is entire and that $W_{+}(x)$ and $W_{-}(x)$ can be written as in~\eqref{Wpm}. In particular, since
\est{
F(0)&=\int_{0}^\infty f(x)\,\frac{dx}x,\\
F(1)&=\int_{0}^\infty f(x)\,{dx},\\
F'(1)&=\int_{0}^\infty f(x)\log x\,{dx},\\
}
formula~\eqref{fbF} can be written as 
\es{
\sum_{n=1}^\infty d(n)&W_{+}\pr{nz}-\frac1z\sum_{n=1}^\infty d(n)W_{-}\pr{-\frac nz}=\\
=&\,\int_{0}^\infty f(x)\pr{\frac1{4x}-\frac{{1 }}{4  z} -\frac{\gamma-\log(2\pi z/x)}{2\pi i z}}\,dx+k(z)+\mbox{}\\
&+\int_{0}^\infty f(x)\int_{\pr{-\frac12}}\frac{\zeta(s)\zeta(1-s)}{\sin\pi s}\pr{\frac zx}^{-s}\,ds\,\frac{dx}{2\pi x},\\
}
for $\Im(z)>0$.

\begin{proof}[Proof of Voronoi's formula]
Let $f:\R_{\geq0}\rightarrow\R$ be a smooth function that decays faster than any power of $x$ and let 
\est{
\tilde f(x):=2\int_{0}^\infty f(y)\cos\pr{2\pi xy}\,dy
}
be the cosine transform of $f(x)$. Then, $\tilde f(x)$ is smooth and, by partial integration, $\tilde f^{(m)}(x)\ll\frac1{x^{2+m}}$ for all $m\geq0$. For  $0<\Re(s)<2$, we can define the Mellin transform of $\tilde f$, 
\est{
F(s):=\int_0^\infty \tilde f(x)x^{s-1}\,dx.
}
By partial integration we see that $F(s)$ extends to a meromorphic function on $\Re(s)<2$ with simple poles at most at the non-positive integers. Also, $F(s)$ decays rapidly on vertical strips. Moreover, by Parseval's formula, for $0<\Re(s)<1$ we have
\est{
F(s)&=\frac 2s\int_0^{\infty}f(y)(2\pi y)^{-s}\Gamma(s+1)\cos\frac{\pi s}2\,dy\\
&=\frac{2}s\int_0^{\infty}f(y)\,dy-2\int_0^{\infty}f(y)\pr{\log (2\pi y)+\gamma}\,dy+O(|s|)\\
&=\frac{F_{-1}}s+F_0+O(|s|),
}
say. For $\Im (z)\geq0$ we can define
\es{
W_+(z):=&\,\frac1{2\pi i}\int_{\pr{\frac32}}F(s)\Gamma(s)\pr{-2\pi iz}^{-s}\, ds=\int_0^{\infty}\tilde f\pr{\frac1x}\e{zx}\,\frac{dx}{x},\\
W_-(z):=&\,\frac1{2\pi i}\int_{\pr{\frac32}}F(1-s)\Gamma(s)\pr{-2\pi iz}^{-s}\, ds\\
=&\,\int_0^{\infty}\pr{\tilde f\pr{x}-\tn{Res}_{s=0}F(s)}\e{zx}\,dx,
}
with the second representation of $W_-(z)$ defined only on $\Im(z)>0$. Since $F(s)$ is rapidly decaying at infinity,~\eqref{fbF} holds for $\Im(z)\geq0$ and so we can apply that formula for $z=1$ and take the real part. By the definition of $\tilde f$,  we have
\est{
\Re\pr{W_+(n)}&=2\int_0^{\infty} f(y)\int_{0}^\infty \cos\pr{ \frac {2\pi y}x} \cos\pr{nx}\,\frac{dx}{x}\,dy\\
&=\int_0^\infty f(y)\pr{2K_0\pr{4\pi\sqrt{ny}}-\pi Y_0\pr{4\pi\sqrt{ny}}}\,dy\\
}
and
\est{
\Re\pr{W_-(-n)}&=\lim_{\substack{z\rightarrow1,\\ \Im(z)>0}}\Re\pr{W_-(-nz)}\\
&=\lim_{\substack{z\rightarrow1,\\ \Im(z)>0}}\Re\int_0^{\infty}\!\!\tilde f\pr{x}\e{-nzx}dx-\lim_{\substack{z\rightarrow1,\\ \Im(z)>0}}\Re\frac{\tn{Res}_{s=0}F(s)}{-2\pi i nz}=\frac12 f(n),
}
since $\tn{Res}_{s=0}F(s)$ is real. Moreover,  $\frac1{2\pi }\int_{\pr{-\frac12}}F(s)\frac{\zeta(s)\zeta(1-s)}{\sin\pi s} z^{-s}\,ds$ is purely imaginary on the real line, so we just need to compute
\est{
\Re\bigg(\mathop{\tn{Res}}_{s=0,1}F(s)\Gamma(s)\zeta(s)^2&(-2\pi i)^{-s}\bigg)=\\
=&\Re\Bigg(\frac{F(1)\pr{\gamma-\log(-2\pi i )}+F'(1)}{-2\pi i }\\
&\qquad+\frac{-F_{-1}\pr{\log (-2\pi i )+\gamma-2\log2\pi}+F_0}{4}\Bigg)\\
=&-\frac{  f(0)}{8}-\frac12\int_0^{\infty}f(y)\pr{\log y+2\gamma}\,dy,
}
since $F(1)=\frac{f(0)}2$ and $F'(1)$ is real. This completes the proof of the theorem.
\end{proof}

\section{An exact formula for the second moment of $\zeta(s)$}
In this section we prove the exact formula for the second moment of the Riemann zeta function.
\begin{proof}[Proof of Theorem~\ref{tr}]
Firstly, observe that
\est{
L_2(\delta)&=-ie^{-\frac{i\delta}2}\int_{\frac12}^{\frac12+i\infty}\zeta(s)\zeta(1-s)e^{i\delta s}\,ds.
}
The functional equation for $\zeta(s)$,
\est{
\zeta(1-s)=\chi(1-s)\zeta(s),
}
where
\est{
\chi(1-s)=(2\pi)^{-s}\Gamma(s)\left(e^{\frac{\pi is}2}+e^{-\frac{\pi is}2}\right),
}
allows us to split $L_2(\delta)$ as
\begin{equation*}
\begin{split}
L_2(\delta)&=-ie^{-\frac{i\delta}2}\int_{\frac12}^{\frac12+i\infty}\chi(1-s)\zeta(s)^2e^{i\delta s}\,ds\\
&=-ie^{-\frac{i\delta}2}\left(L^+(\delta)+L^-(\delta)\right),
\end{split}
\end{equation*}
where
\begin{equation*}
\begin{split}
L^\pm(\delta)=\int_\frac12^{\frac12+i\infty}(2\pi)^{-s}\Gamma(s)e^{\pm\frac{\pi i s}{2}}\zeta(s)^2e^{i\delta s}\,ds.
\end{split}
\end{equation*}
By Stirling's formula $L^+(\delta)$ is analytic for $\Re(\delta)>-\pi$. Moreover, by contour integration,
\begin{equation*}
\begin{split}
L^-(\delta)&=\int_{(2)}(2\pi)^{-s}\Gamma(s)e^{-\frac{\pi i s}{2}}\zeta(s)^2e^{i\delta s}\,ds-G(\delta)\\
&=J(\delta)-G(\delta),
\end{split}
\end{equation*}
say, where
\begin{equation*}
\begin{split}
G(\delta):=&\int_{\frac12-i\infty}^\frac12(2\pi)^{-s}\Gamma(s)e^{-\frac{\pi i s}{2}}\zeta(s)^2e^{i\delta s}\,ds+\mbox{}\\
&+2\pi i\,\textnormal{Res}_{s=1}\left((2\pi)^{-s}\Gamma(s)e^{-\frac{\pi i s}2}\zeta(s)^2e^{i\delta s}\right)
\end{split}
\end{equation*}
is analytic  for $\Re(\delta)<\pi$. Now, expanding $\zeta(s)^2$ into its Dirichlet series, for $\Re(\delta)>0$ we have
\begin{equation}\label{aaa}
\begin{split}
J(\delta)&=\sum_{n= 1}^\infty d(n)\int_{2-i\infty}^{2+i\infty}\Gamma(s)(2\pi i ne^{-i\delta})^{-s}\,ds\\
&=2\pi i\s_0\left(-e^{-i\delta}\right)=2\pi i\s_0\left(1-e^{-i\delta}\right).
\end{split}
\end{equation}
By Theorem~\ref{tb}, we can write this as
\est{
J(\delta)&=\frac {\log2\pi\delta-\gamma}{1-e^{-i\delta}}-\pi g_0\pr{1-e^{-i\delta}}+\frac{2\pi i}{1-e^{-i\delta}}\s_0\left(\frac {-1}{1-e^{-i\delta}}\right)+ie^{i\delta}\omega(\delta),
}
where
\est{
\omega(\delta)=-\frac{\log\pr{\frac{1-e^{-i\delta}}{\delta}}-\frac{\pi i}2}{2\sin\pr{\frac\delta 2}}
}
is holomorphic in $\pmd{\Re(\delta)}<\pi$. Summing up, we have
\es{\label{qf}
L_2(\delta)=&\frac{\gamma-\log2\pi\delta}{2\sin\frac\delta2}+\frac{\pi i}{\sin\frac\delta2}\s_0\left(\frac {-1}{1-e^{-i\delta}}\right)+i\pi e^{-\frac {i\delta}2}g_0\pr{1-e^{-i\delta}}\\
&+\omega(\delta)-ie^{-\frac{i\delta}2}\pr{L^+(\delta)-G(\delta)}.
}
The theorem then follows after writing
\est{
h(\delta):=i\pi e^{-i\frac{\delta}2}g_0(1-e^{-i\delta})
}
and applying Theorem~\ref{tb} and~\ref{tb2}.
\end{proof}

\section{Cotangent sums}
We start by recalling the basic properties of $D\pr{s,a,\frac{h}k}$.
\begin{lemma}\label{D}
For (h,k)=1, $k>0$ and $a\in\C$,
\est{
D\pr{s,a,\frac hk}-k^{1+a-2s}\zeta(s-a)\zeta(s)
}
is an entire function of $s$. Moreover, $D\pr{s,a,\frac hk}$ satisfies a functional equation,
\es{\label{fe}
D\pr{s,a,\frac hk}=&-\frac{2}{k}\pr{\frac{k}{2\pi}}^{2-2s+a}\Gamma\pr{1-s+a}\Gamma\pr{1-s}\times\\
&\times\Big(\cos\pr{\frac\pi2\pr{2s-a}}D\pr{1-s,-a,-\frac {\overline h}k}+\\
&\qquad-\cos\frac{\pi a}2\, D\pr{1-s,-a,\frac {\overline h}k}\Big),
}
and
\est{
D\pr{0,a,\frac hk}=\frac i2 c_{a}\pr{\frac hk}-\frac12\zeta\pr{-a}.
}
\end{lemma}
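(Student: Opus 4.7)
The plan is to reduce all three statements to the Hurwitz decomposition of $D$ obtained by splitting $\sigma_a$ as a Dirichlet convolution. Writing $n=dm$ in the Dirichlet series defining $D$ and organising $d,m\ge 1$ by their residues $r,j\in\{1,\dots,k\}$ modulo $k$, the double sum collapses to
\est{
D\pr{s,a,\frac hk}=k^{a-2s}\sum_{r,j=1}^{k}\e{rjh/k}\,\zeta\pr{s-a,r/k}\zeta\pr{s,j/k}.
}
All the analytic structure of $D$ is then inherited from that of the Hurwitz zetas on the right.

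For the first claim I would compute the residues of the right-hand side at the candidate poles $s=1$ and $s=1+a$. At $s=1$ the residue features the Gauss-type sum $\sum_{j=1}^{k}\e{rjh/k}$ which, since $\gcd(h,k)=1$, equals $k$ when $k\mid r$ and $0$ otherwise; the mirror cancellation occurs at $s=1+a$. The surviving contributions match precisely the residues of $k^{1+a-2s}\zeta(s-a)\zeta(s)$, proving that the difference is entire.

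For the functional equation I would substitute into the decomposition Hurwitz's formula
\est{
\zeta\pr{s,j/k}=\frac{\Gamma(1-s)}{(2\pi k)^{1-s}}\sum_{\epsilon=\pm 1}e^{i\epsilon\pi(1-s)/2}\sum_{m=1}^{k}\e{-\epsilon mj/k}\zeta\pr{1-s,m/k}
}
and its shift by $a$ for $\zeta(s-a,r/k)$. This yields four terms indexed by signs $\epsilon,\delta\in\{\pm 1\}$. The inner sum $\sum_{r,j=1}^{k}\e{(rjh-\epsilon m'r-\delta mj)/k}$ collapses by orthogonality in $r$ to the condition $jh\equiv\epsilon m'\pmod{k}$, which by $\gcd(h,k)=1$ pins down a unique $j$ and gives $k\cdot\e{-\delta\epsilon mm'\overline h/k}$. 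Recognising the resulting sum over $m,m'$ as $k^{2-2s+a}D\pr{1-s,-a,-\delta\epsilon\overline h/k}$ via the decomposition applied in reverse, the two terms with $\delta\epsilon=+1$ combine to $-2\cos\pr{\pi(2s-a)/2}D(1-s,-a,-\overline h/k)$ and the two with $\delta\epsilon=-1$ to $+2\cos\pr{\pi a/2}D(1-s,-a,\overline h/k)$; assembling the powers of $k$ and $2\pi$ gives exactly~\eqref{fe}.

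For the value at $s=0$ I would substitute $\zeta(0,j/k)=1/2-j/k$ into the Hurwitz decomposition and evaluate the inner sum over $j$ for each $r$. Using that $\sum_{j=1}^{k}\e{rjh/k}$ vanishes for $1\le r\le k-1$ together with the elementary identity $\sum_{j=1}^{k}j\,\e{rjh/k}=\frac{k}{2}-\frac{ik}{2}\cot\pr{\pi rh/k}$, the inner bracket becomes $-\frac12+\frac i2\cot\pr{\pi rh/k}$ (the cotangent part absent for $r=k$); summing over $r$ and using $\sum_{r=1}^{k}\zeta(-a,r/k)=k^{-a}\zeta(-a)$ together with the definition of $c_a$ produces $D(0,a,h/k)=-\zeta(-a)/2+(i/2)c_a(h/k)$. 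The main technical obstacle is the bookkeeping in the functional equation step: one must carefully track the four sign combinations, the inversion of $h$ via orthogonality, and the various powers of $k$ and $2\pi$ so that the coefficients really assemble into the compact prefactor $-\frac{2}{k}\pr{\frac{k}{2\pi}}^{2-2s+a}$ stated in~\eqref{fe}.
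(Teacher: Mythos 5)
Your proposal is correct and follows essentially the same route as the paper: the same decomposition $D\pr{s,a,\frac hk}=k^{a-2s}\sum_{m,n=1}^{k}\e{mnh/k}\zeta\pr{s-a,\frac mk}\zeta\pr{s,\frac nk}$, with the analytic continuation and functional equation inherited from the Hurwitz zeta function and the value at $s=0$ obtained from $\zeta(0,x)=\tfrac12-x=-B_1(x)$ and the cotangent identity. You merely spell out the sign/orthogonality bookkeeping that the paper leaves implicit, and all of your intermediate identities check out.
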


\begin{proof}
The analytic continuation and the functional equation for $D\pr{s,a,\frac{h}k}$ can be proved easily using the analogous properties for the Hurwitz zeta function and the observation that
\est{
D\pr{s,a,\frac hk}=&\frac1{k^{2s-a}}\sum_{m,n=1}^{k}\e{\frac{mn h}k}\zeta\pr{s-a,\frac mk}\zeta\pr{s,\frac nk}.
}
Moreover, applying this equality at 0, we see that
\est{
D\pr{0,a,\frac hk}&=-k^a\sum_{m,n=1}^{k-1}\e{\frac{mn h}k}\zeta\pr{-a,\frac mk}B_1\pr{\frac nk}-\frac{\zeta\pr{-a}}2\\
&=\frac i2 c_{a}\pr{\frac hk}-\frac{\zeta\pr{-a}}2,
}
where we used
\est{
\sum_{n=1}^{k-1}B_1\pr{\frac nk}\pr{\e{\frac{mh}k}}^n=-\frac12\frac{1+\e{\frac{mh}k}}{1-\e{\frac{mh}k}}=-\frac i2\cot\pr{\frac{\pi  mh}{k}},
}
that can be easily obtained from the equality
\est{
B_1(x)=\left.\frac{\tn{d}}{\tn{d}t}\pr{\frac{te^{xt}}{e^t-1}}\right|_{t=0}.
}
\end{proof}

\begin{proof}[Proof of Theorem~\ref{tca}]
Firstly, observe that we can assume $0\neq|a|<1$, since the result extends to all $a$ by analytic continuation. Now, taking $z=\frac{h}k\pr{1+i\delta}$, with $\delta>0$, we have
\est{
\s_a(z)&=\sum_{n\geq1}\sigma_a(n)\e{n\frac hk}e^{-2\pi n\frac hk\delta}\\
&=\frac1{2\pi i}\int_{\pr{2}}\Gamma(s)D\pr{s,a,\frac hk}\pr{2\pi \frac hk\delta}^{-s}\,ds.\\
}
Therefore, moving the integral to $\sigma=-\frac12$,
\est{
\s_a(z)=\frac{k^a}{2\pi h\delta} \zeta(1-a)+\frac{1}{(2\pi h\delta)^{1+a}}\zeta(1+a)\Gamma(1+a)+D\pr{0,a,\frac hk}+O\pr{\delta^\frac12}.
}
Similarly,
\est{
\frac1{z^{1+a}}\s_a\pr{\frac{-1}z}=&\,\frac1{z^{1+a}}\sum_{n\geq1}\sigma_a(n)\e{-n\frac{k}h}e^{-2\pi n \frac kh\frac{\delta}{1+i\delta}}\\
=&\,\frac{k^a}{2\pi \delta h}\zeta(1-a)+\frac{1}{(2\pi\delta h)^{1+a}}\zeta(1+a)\Gamma(1+a)\\
&-ia\frac{k^a}{2\pi h}\zeta(1-a)+\pr{\frac k{h(1+i\delta)}}^{1+a}D\pr{0,a,-\frac kh}+O\pr{\delta^\frac12}.
}
In particular, as $\delta$ goes to 0, we have
\est{
\s_a(z)-\frac1{z^{1+a}}\s_a\pr{\frac{-1}z}\longrightarrow& D\pr{0,a,\frac hk} -\pr{\frac k{h}}^{1+a}D\pr{0,a,-\frac kh}+\mbox{}\\
&+ia\frac{k^a}{2\pi h}\zeta(1-a).
}
Applying Theorem~\ref{tb}, it follows that
\est{
D\pr{0,a,\frac hk} -\pr{\frac k{h}}^{1+a}D\pr{0,a,-\frac kh}+&ia\frac{k^a}{2\pi h}\zeta(1-a)=\\
&=\frac{\zeta(-a)}2\pr{\pr{\frac kh}^{1+a}-1+\psi_a\pr{\frac hk}},
}
which is equivalent to~\eqref{fca}.
\end{proof}

We conclude the paper by giving a new proof of Vasyunin's formula (with a shift).

\begin{theo}\label{vasp}
Let $(h,k)=1$, $h,k\geq1$. Let $\pmd{\Re(a)}<1$. Then
\est{
&\frac{1+a}{2\pi }\int_{-\infty}^\infty \zeta\pr{\frac 12+\frac a2+it}\zeta\pr{\frac 12+\frac a2-it}\pr{\frac{h}k}^{-it}\frac{dt}{(\frac 12+\frac a2+it)(\frac12+\frac a2-it)}=\\
&=\,-\frac{\zeta\pr{1+a}}{ 2}\pr{\pr{\frac kh}^{\frac12+\frac a2}+\pr{\frac hk}^{\frac12+\frac a2}}+\frac{\zeta\pr{a}}{ a}\pr{\pr{\frac kh}^{\frac12-\frac a2}+\pr{\frac hk}^{\frac12-\frac a2}}+\\
&\quad -\pr{\frac{1}{hk}}^{\frac 12+\frac a2}(2\pi)^{a}\Gamma(-a)\sin\frac{\pi a}2\pr{c_{a}\pr{\frac {\overline h}k}+c_{a}\pr{\frac {\overline k}h}}.\\
}
\end{theo}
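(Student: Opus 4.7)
The plan is to recast $I$ as a Mellin--Barnes contour integral and then exploit the period function of $\s_a$ from Theorem~\ref{tb}. After the substitution $s=\tfrac12+\tfrac a2+it$,
\est{
I=\frac{1+a}{2\pi i}\int_{\pr{\tfrac12+\tfrac a2}}\zeta(s)\zeta(1+a-s)\pr{\tfrac hk}^{\tfrac12+\tfrac a2-s}\frac{ds}{s(1+a-s)}.
}
The partial fraction $\tfrac{1+a}{s(1+a-s)}=\tfrac1s+\tfrac1{1+a-s}$ together with the involution $s\mapsto 1+a-s$ on the critical line reduces the problem to computing $I=I_1(h/k)+I_1(k/h)$, where
\est{
I_1(x):=\frac{1}{2\pi i}\int_{\pr{\tfrac12+\tfrac a2}}\zeta(s)\zeta(1+a-s)\,x^{\tfrac12+\tfrac a2-s}\frac{ds}{s}.
}

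Next I would apply $\zeta(1+a-s)=\chi(1+a-s)\zeta(s-a)$ to expose the Dirichlet series $\zeta(s)\zeta(s-a)=\sum_n\sigma_a(n)n^{-s}$ and shift the contour of $I_1(x)$ to the right of $s=1$, collecting the residue $\zeta(a)\,x^{-\tfrac12+\tfrac a2}$. On the new contour the termwise evaluation of the inner Mellin--Barnes integrals against $\Gamma(s-a)$, via the identity $\frac{1}{2\pi i}\int\Gamma(w)z^{-w}\,dw=e^{-z}$, yields
\est{
I_1(x)=\zeta(a)\,x^{-\tfrac12+\tfrac a2}+2\,x^{\tfrac12-\tfrac a2}\,\Re\int_1^\infty u^{-a-1}\s_{-a}(xu+i0^+)\,du,
}
the series--integral being interpreted in the Abel-summable sense. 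Feeding the period identity~\eqref{rfs} (with $a$ replaced by $-a$) into $\s_{-a}(xu+i0^+)$ splits it into the explicit polar part, the dual term $(xu)^{a-1}\s_{-a}(-1/(xu))$, and the smooth piece $\tfrac i2 g_{-a}(xu)$ that is holomorphic on $\C'$ by Theorem~\ref{tb}.

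The $u$-integration of the polar part is elementary, and together with the identity $\zeta(a)=2(2\pi)^{a-1}\sin\tfrac{\pi a}{2}\Gamma(1-a)\zeta(1-a)$ it contributes the $\zeta(a)/a$ and, after pairing with the $I_1(k/h)$ counterpart, $\zeta(1+a)/2$ terms of the statement. The dual term, under the substitution $v=-1/(xu)$, becomes (a multiple of) $\Re\int_{-k/h}^{0}\s_{-a}(v+i0^+)\,dv$; evaluating this antiderivative termwise in the Abel sense produces a regularised version of $D(1,-a,k/h)$, which the functional equation of $D$ (Lemma~\ref{D}) expresses in terms of $D(0,a,\pm\overline k/h)$. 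Since $D(0,a,m)=\tfrac i2 c_a(m)-\tfrac12\zeta(-a)$, this yields the cotangent-sum contribution, and the prefactor $(2\pi)^a\Gamma(-a)\sin\tfrac{\pi a}{2}$ assembles from the gamma factors of the Estermann FE combined with the trigonometric coefficients of the period-function polar terms. Summing with the analogous contribution from $I_1(k/h)$ produces the symmetric combination $c_a(\overline h/k)+c_a(\overline k/h)$ with the stated prefactor; the $g_{-a}$ pieces cancel between the two halves by the $h\leftrightarrow k$ symmetry. The principal obstacle is the rigorous treatment of the Abel-summable interchanges and the extraction of the finite part of $D(s,-a,k/h)$ at its pole $s=1$; this is handled by introducing a regulator $x\mapsto x(1-i\delta)$ with $\delta\to 0^+$ (as in the proof of Theorem~\ref{tr}), and the resulting identity, first proved for $|\Re(a)|$ sufficiently small, is extended to the full strip $|\Re(a)|<1$ by analytic continuation in $a$.
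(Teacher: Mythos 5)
Your opening and closing agree with the paper: the partial--fraction split $\frac{1+a}{s(1+a-s)}=\frac1s+\frac1{1+a-s}$ together with the involution $s\mapsto 1+a-s$, giving $I=I_1(h/k)+I_1(k/h)$, is exactly the paper's reduction to $I_a(h/k)+I_a(k/h)$, and the final conversion of the boundary sum into $c_a$ via the functional equation of the Estermann function (Lemma~\ref{D}) is also the paper's last step. The middle of your argument, however, has a genuine gap. The paper never invokes the period relation~\eqref{rfs}: it applies $\zeta(1-s)=\chi(1-s)\zeta(s)$ inside the Mellin integral, computes the kernel $\frac1{2\pi i}\int_{(2)}\frac{\chi(1-s)}{1-s}u^{-s}\,ds=\frac{\sin 2\pi u}{\pi u}$ exactly, and collects residues at \emph{both} $s=1$ and $s=1-a$; those two residues are precisely where $-\frac{\zeta(1+a)}{2}$ and $\frac{\zeta(a)}{a}$ come from. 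In your normalisation only one pole is crossed when you shift right (and it enters with a minus sign, $-\zeta(a)x^{\frac a2-\frac12}$, not a plus: moving the contour rightward subtracts the residue); the second pole is killed by the zero of $\chi(1+a-s)$ at $s=1+a$, because your kernel $1/s$ has its pole at $s=0$, on the other side. So the $\zeta(1+a)$ term cannot arise where you say it does, and indeed it does not: if you insert~\eqref{rfs} (with $a\mapsto -a$) into $2x^{\frac12-\frac a2}\Re\int_1^\infty u^{-a-1}\s_{-a}(xu+i0^+)\,du$ and integrate termwise, the polar term $i\zeta(1+a)/(2\pi z)$ is purely imaginary on $\R_{>0}$ and contributes nothing to the real part; the constant term $-\zeta(a)/2$ contributes $-\frac{\zeta(a)}{a}x^{\frac12-\frac a2}$, the \emph{wrong sign}; and the term proportional to $(2\pi z)^{a-1}$ contributes $+\zeta(a)x^{\frac a2-\frac12}$, which exactly cancels the (sign-corrected) shift residue. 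The explicit terms therefore produce neither $-\frac{\zeta(1+a)}{2}\pr{(h/k)^{\frac12+\frac a2}+(k/h)^{\frac12+\frac a2}}$ nor the correct coefficient of $\zeta(a)/a$.

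The missing contributions are hidden in the pieces you discard. The dual term $z^{a-1}\s_{-a}(-1/z)$ grows like $z^{a}$ as $u\to\infty$ (since $\s_{-a}(w)\sim i\zeta(1+a)/(2\pi w)$ as $w\to0$), so $\int_1^\infty u^{-a-1}\,(xu)^{a-1}\s_{-a}(-1/(xu))\,du$ diverges logarithmically and the decomposition cannot be integrated term by term; likewise the $g_{-a}$ piece does not ``cancel between the two halves by $h\leftrightarrow k$ symmetry'' --- the two halves are divergent integrals over different rays and do not cancel; the only reason $g_{-a}$ could drop out is that $\frac i2 g_{-a}$ is purely imaginary on $\R_{>0}$, an argument that is not legitimate for a divergent integral without tracking how the regulator tilts it into the complex plane. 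The same applies to the $v\to 0^-$ endpoint of your antiderivative of $\s_{-a}$: it behaves like $\frac{i\zeta(1+a)}{2\pi}\log(-2\pi i v)$ plus a $v^{a}/a$ term, and its real part tends to a nonzero finite limit (proportional to $\arg$ of the regularised variable), which is exactly where the $-\zeta(1+a)/2$ term and the correction to $\zeta(a)/a$ live. Repairing this requires carrying all of these regulator-dependent finite parts, at which point you have reconstructed, less transparently, the paper's direct computation with the absolutely bounded kernel $\sin(2\pi u)/(\pi u)$ and the two residues. I would recommend abandoning the detour through~\eqref{rfs} here.
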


\begin{proof}
We need to evaluate
\est{
\frac{1+a}{2\pi (hk)^{\frac12+\frac a2}}&\int_{-\infty}^\infty \zeta\pr{\frac 12+\frac a2+it}\zeta\pr{\frac 12+\frac a2-it}\pr{\frac{h}k}^{it}\frac{dt}{(\frac 12+\frac a2+it)(\frac12+\frac a2-it)}=\\
&=\frac{1+a}{2\pi i}\int_{\pr{\frac 12-\frac {\Re(a)}2}}\frac{\zeta\pr{s+a}\zeta\pr{1-s}}{ h^{s+a}k^{1-s}}\frac{ds}{(s+a)(1-s)}.
}
We rewrite this as
\est{
\frac{1+a}{2\pi i}&\int_{\pr{\frac 12-\frac {\Re(a)}2}}\frac{\zeta\pr{s+a}\zeta\pr{1-s}}{ h^{s+a}k^{1-s}}\frac{ds}{(s+a)(1-s)}=\\
=&\,\frac1{2\pi i}\int_{\pr{\frac 12-\frac {\Re(a)}2}}\frac{\zeta\pr{s+a}\zeta\pr{1-s}}{ h^{s+a}k^{1-s}}\frac{ds}{1-s}+\mbox{}\\
&+\frac1{2\pi i}\int_{\pr{\frac 12-\frac {\Re(a)}2}}\frac{\zeta\pr{s+a}\zeta\pr{1-s}}{ h^{s+a}k^{1-s}}\frac{ds}{s+a}\\
=&\,I_a\pr{\frac hk}+I_a\pr{\frac kh},
}
where
\est{
I_a\pr{\frac hk}:=\frac1{2\pi i}\int_{\pr{\frac 12-\frac {\Re(a)}2}}\frac{\zeta\pr{s+a}\zeta\pr{1-s}}{ h^{s+a}k^{1-s}}\frac{ds}{1-s}.
}
The integral is not absolutely convergent, so some care is needed. One could introduce a convergence factor $e^{\delta s^2}$ and let $\delta\rightarrow0^+$ at the end of the argument, or one could work with the understanding that the integrals are to be interpreted as $\lim_{T\rightarrow\infty}\int_{c-iT}^{c+iT}$. We opt for the latter. Recall that $\zeta(s)=\chi(s)\zeta(1-s),$ where
\est{
\chi(1-s)=\pr{(2\pi i )^{-s}+(-2\pi i )^{-s}}\Gamma(s).
}
This leads to
\est{
\frac{1}{2\pi i}\int_{\pr{2}}\frac{\chi(1-s)}{1-s}u^{-s}\,ds&=\frac{-1}{2\pi i}\int_{\pr{2}}\pr{(2\pi i )^{-s}+(-2\pi i )^{-s}}\frac{\Gamma(s)}{s-1}u^{-s}\,d s\\
&=\frac{-1}{2\pi i u}\int_{\pr{1}}\pr{(2\pi i )^{-s-1}+(-2\pi i )^{-s-1}}\Gamma(s)u^{-s}\,d s\\
&=\frac{\sin2\pi u}{\pi u}.
}
Using Cauchy's theorem, the functional equation for $\zeta(s)$, and the Dirichlet series for $\zeta(s+a)\zeta(s)$, we have
\est{
I_a\pr{\frac hk}=&-\tn{Res}_{s=1}\frac{\chi(1-s)\zeta(s+a)\zeta(s)}{h^{s+a}k^{1-s}(1-s)}-\tn{Res}_{s=1-a}\frac{\chi(1-s)\zeta(s+a)\zeta(s)}{h^{s+a}k^{1-s}(1-s)}+\\
&+\frac1{\pi h^{1+a}}\sum_{n=1}^{\infty}\frac{\sigma_{-a}(n)\sin 2\pi n\frac hk}{n}\\
=&-\frac{\zeta\pr{1+a}}{ 2h^{1+a}}+\frac{\zeta\pr{a}}{ ahk^{a}}+\frac1{\pi h^{1+a}}\sum_{n=1}^{\infty}\frac{\sigma_{-a}(n)\sin 2\pi n\frac hk}{n}.
}
By the functional equation for $D$ we see that
\est{
&\frac{D\pr{s,-a,\frac hk}-D\pr{s,-a,-\frac hk}}{2i}=\frac{2}{k}\pr{\frac{k}{2\pi}}^{2-2s-a}\Gamma\pr{1-s-a}\Gamma\pr{1-s}\times\\
&\qquad\times\pr{\cos\pr{\frac\pi2\pr{2s+a}}+\cos\frac{\pi a}2}\pr{D\pr{1-s,a,\frac {\overline h}k}-D\pr{1-s,a,-\frac {\overline h}k}},
}
so that, defining
\est{
S\pr{s,-a,\frac hk}:=& \sum_{n=1}^{\infty}\frac{\sigma_{-a}(n)\sin2\pi n\frac hk}{n^s},
}
we have
\es{\label{sfe}
S\pr{s,-a,\frac hk}=&\,\frac{2}{k}\pr{\frac{k}{2\pi}}^{2-2s-a}\Gamma\pr{1-s-a}\Gamma\pr{1-s}\times\\
&\times\pr{\cos\pr{\frac\pi2\pr{2s+a}}+\cos\frac{\pi a}2}S\pr{1-s,a,\frac{\overline h}k}.
}
In particular, $S\pr{s,-a,\frac hk}$ is regular at $s=1$. Noting that
\est{
\lim_{s\rightarrow1}\Gamma(1-s-a)\Gamma(1-s)\pr{\cos\pr{\frac\pi2\pr{2s+a}}+\cos\frac{\pi a}2}=-\pi\Gamma(-a)\sin\frac{\pi a}2
}
and
\est{
S\pr{0,a,\frac {\overline h}k}=\frac12 c_{a}\pr{\frac {\overline h}k},
}
we obtain, by letting $s\rightarrow1$ in~\eqref{sfe}, the identity
\est{
S\pr{1,-a,\frac hk}=2^{a}\pr{\frac{\pi}{k}}^{1+a}\Gamma(-a)\sin\frac{\pi a}2c_{a}\pr{\frac {\overline h}k},
}
whence
\est{
\sum_{n=1}^{\infty}\frac{\sigma_{-a}(n)\sin2\pi n\frac hk}{\pi nh^{1+a} }=-\pr{\frac{1}{hk}}^{1+a}(2\pi)^{a}\Gamma(-a)\sin\frac{\pi a}2c_{a}\pr{\frac {\overline h}k}.
}
Thus,
\est{
I_a\pr{\frac hk}=-\frac{\zeta\pr{1+a}}{ 2h^{1+a}}+\frac{\zeta\pr{a}}{ ahk^{a}}-\pr{\frac{1}{hk}}^{1+a}(2\pi)^{a}\Gamma(-a)\sin\frac{\pi a}2c_{a}\pr{\frac {\overline h}k}
}
and the theorem follows.
\end{proof}

\appendix

\end{document}